\def\mathcal{\mathscr}
\newfont{\aaa}{cmb10 at 19pt}
\newfont{\bbb}{cmb10 at 14pt}
\newtheorem{theorem}{Theorem}
\newtheorem{lemma}{Lemma}[]
\newtheorem{conjecture}{Conjecture}[]
\newtheorem*{conjecture*}{Conjecture}
\newtheorem{claim}{Claim}[]
\theoremstyle{definition}
\theoremstyle{remark}
\newcommand{\Rmnum}[1]{\expandafter\@slowromancap\romannumeral #1@}
\newcommand{\beq}{\begin{equation}}
\newcommand{\eeq}{\end{equation}}
\newcommand{\bey}{\begin{eqnarray}}
\newcommand{\eey}{\end{eqnarray}}
\newcommand{\beyy}{\begin{eqnarray*}}
\newcommand{\eeyy}{\end{eqnarray*}}
\begin{document}

\thispagestyle{empty} \thispagestyle{fancy} {
\fancyhead[RO,LE]{\scriptsize \bf 
} \fancyfoot[CE,CO]{}}
\renewcommand{\headrulewidth}{0pt}


\setcounter{page}{1}
\qquad\\[8mm]

\centerline{\aaa{The confirmation of a conjecture on disjoint cycles in a graph$^{\ast}$}}

\vskip5mm
\centerline{\bf Fuhong Ma,\quad Jin Yan$^{\dagger}$}
\centerline\footnotesize{{$School \ of \ Mathematics, \ Shandong \ University, \ Jinan \ 250100, \ China$}
\vskip13mm

\normalsize\noindent{\bf Abstract}\quad
In this paper, we prove the following conjecture proposed by Gould, Hirohata and Keller [Discrete Math. submitted]: Let $G$ be a graph of sufficiently large order. If $\sigma_t(G) \geq 2kt - t + 1$ for any two integers $k \geq 2$ and $t \geq 5$, then $G$ contains $k$ disjoint cycles.
\footnotetext{\\$^{\ast}$This work is supported by NNSF of China(No.11671232, 11271230, ).\\$^{\dagger}$Corresponding author: Jin Yan, E-mail: yanj@sdu.edu.cn}
\\

\noindent{\bf Keywords}\quad Disjoint; Cycle; Degree sum\\
\noindent{\bf AMS Subject Classification}\quad 05C70, 05C38\\

\section{Introduction}

In this paper,  we consider only finite undirected graphs without loops and multiple edges. Let $G$ be a  simple graph.
 A set of subgraphs of $G$ is said to be vertex-disjoint if no two of them have any vertex in common.  Denote by $e(G)$ the number of edges of $G$. For a vertex $x \in V(G)$, the neighborhood of $x$ in $G$ is denoted by $N_G(x)$, and $d_G(x) = |N_G(X)|$ is the degree of $x$ in $G$. For a subgraph $H$ of $G$ and a vertex $x \in V(H)$, we denote $N_H(x) = N_G(x) \cap V(H)$ and $d_H(x) = |N_H(x)|$. For a subgraph $H$ and a subset $S$ of $H$, $d_H(S) = \sum\limits_{x \in S}d_H(x)$. The vertex subgraph induced by $S$ is denoted by $G[S]$, and $G - S = G[V(G) - S]$.
For a graph $G$, $|G| = |V(G)|$ is the order of $G$, $\omega(G)$ is the number of components of $G$, $\delta(G)$
is the minimum degree of $G$, and
\begin{center}
 $\sigma_k(G) = $min$\{\sum\limits_{x \in X}d_{G}(x)$ $|$ $X$ is an independent set of $G$ with $|X|=k\}$.
\end{center}
For graphs $G$ and $H$, $G \cup H$ denotes the union of $G$ and $H$. For a graph $G$, $mG$ denotes the union of $m$ copies of $G$. $K_n$ denotes a complete graph of order $n$.

In this paper, we consider degree sum conditions and the existence of vertex-disjoint cycles. For convenience, we write disjoint instead of vertex-disjoint. Finding proper conditions for disjoint cycles is an interesting problem. In 1962 \cite{Erdos}, Erd$\ddot{o}$s and P$\acute{o}$sa found a condition concerning the number of edges to ensure two disjoint cycles by proving that every graph $G$ of order $n \geq 6$, if $e(G) \geq 3n - 6$, then G has 2
disjoint cycles or is isomorphic to $K_3 + (n - 3)K_1$.  In 1963 \cite{Dirac}, Dirac gave a minimum degree condition for $k$ disjoint triangles. They proved that for $k \geq 1$, any graph G with order at least $n \geq 3k$ and $\delta(G) \geq (n + k) / 2$ contains $k$ disjoint triangles. For general cycles, Corr$\acute{a}$di and Hajnal proved a classical result.

\begin{theorem}\label{C and H} (Corr$\acute{a}$di and Hajnal \cite{candh}) Suppose that $|G| \geq 3k$ and $\delta(G) \geq 2k$. Then $G$ contains $k$ disjoint cycles.
\end{theorem}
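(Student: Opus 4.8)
\medskip
\noindent\textbf{Proof proposal.}
The plan is to induct on $k$. For the base case $k=1$ the hypothesis $\delta(G)\ge 2$ already forces a cycle: taking a maximal path and using that its endpoint has all of its (at least two) neighbours on the path produces one. For the inductive step I would argue by contradiction. Suppose $G$ satisfies $|G|\ge 3k$ and $\delta(G)\ge 2k$ but contains no $k$ disjoint cycles. Since $\delta(G)\ge 2k>2(k-1)$ and $|G|\ge 3k>3(k-1)$, the induction hypothesis already supplies $k-1$ disjoint cycles. Among all families of disjoint cycles in $G$, choose one, $\{C_1,\dots,C_m\}$, whose number of cycles $m$ is as large as possible, and, subject to that, whose total number of vertices $\sum_i|C_i|$ is as small as possible; the induction hypothesis gives $m\ge k-1$ while the contradiction hypothesis gives $m\le k-1$, so $m=k-1$. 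Write $H=G-\bigcup_iV(C_i)$.

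Next I would extract the structural consequences of this extremal choice. Because $m$ is maximum, $H$ contains no cycle, i.e.\ $H$ is a forest; in particular, if $H$ is nonempty it has a vertex $v$ with $d_H(v)\le 1$. Because $\sum_i|C_i|$ is minimum, no $C_i$ has a chord: a chord would split $C_i$ into a strictly shorter cycle on a subset of $V(C_i)$, which could replace $C_i$ while keeping $m$ fixed and decreasing the total length. The same minimality argument, now applied to a vertex $v\in V(H)$ with several neighbours on one cycle, is the engine of the proof: if $v$ has three neighbours on a cycle $C_i$, then two of them cut off an arc with at most $|C_i|/3$ edges, so $v$ together with that arc forms a cycle of at most $|C_i|/3+2$ vertices; for $|C_i|\ge 4$ this is strictly shorter than $C_i$, contradicting minimality. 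Hence every vertex of $H$ has at most two neighbours on each non-triangular $C_i$, and three neighbours on a triangle $C_i$ only if it is adjacent to all of $C_i$.

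With these constraints I would run a counting argument. Fixing $v\in V(H)$ with $d_H(v)\le 1$, the degree condition leaves $v$ at least $2k-1$ neighbours distributed over the $k-1$ cycles, so by averaging some $C_j$ receives at least three of them, forcing $C_j$ to be a triangle dominated by $v$. The goal is then to convert such dominated triangles, in combination with the edges of the forest $H$, into one extra disjoint cycle or into a family of the same size but strictly smaller total length, either of which contradicts the extremal choice. Concretely, I would examine an edge $uv$ of $H$ (or, when $H$ is edgeless, two vertices of $H$ with many common cycle-neighbours) and show that $\{u,v\}$ together with the triangles they jointly dominate can be rerouted into $k$ disjoint cycles. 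The degenerate case $H=\varnothing$, in which all of $G$ is covered by $k-1$ induced cycles of total length at least $3k$, would be handled separately by splitting a long covering cycle using the high minimum degree.

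I expect the genuine difficulty to lie in this last surgery step rather than in the setup. Turning local density---many neighbours on one cycle, or an edge inside the forest---into two genuinely disjoint cycles requires tracking the cyclic positions of the relevant neighbours and dealing with the boundary cases (triangles, short cycles, and the all-covering case) where simple arc-shortening fails. Making the averaging sharp enough that it always locates a configuration admitting such a reroute, while respecting the basic obstruction that two disjoint cycles need at least six vertices, is the crux; the remainder should be bookkeeping driven by the maximality of $m$ and the minimality of $\sum_i|C_i|$.
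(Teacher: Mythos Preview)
The paper does not prove this theorem at all: Theorem~\ref{C and H} is quoted as the classical 1963 result of Corr\'adi and Hajnal and is merely cited from \cite{candh} to motivate the later degree-sum generalisations. There is therefore no ``paper's own proof'' to compare your proposal against.

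As for the proposal itself: the extremal setup you describe (maximise the number of disjoint cycles, then minimise their total length; deduce that the leftover $H$ is a forest, that each $C_i$ is chordless, and that a vertex of $H$ can have at most three neighbours on any $C_i$, with equality forcing a triangle) is exactly the standard opening and matches Lemma~\ref{lma} in the present paper. Your averaging step, locating a leaf $v$ of $H$ and a triangle $C_j$ dominated by $v$, is also correct. However, you stop precisely where the real content of Corr\'adi--Hajnal begins. The sentence ``convert such dominated triangles, in combination with the edges of the forest $H$, into one extra disjoint cycle or into a family of the same size but strictly smaller total length'' is the entire theorem, and you have not indicated how to do it; you yourself flag this as ``the genuine difficulty''. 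In particular, a single dominated triangle plus one edge in $H$ is not enough, and the case $H=\varnothing$ (all of $G$ covered by $k-1$ induced cycles) is not dispatched by ``splitting a long covering cycle using the high minimum degree'': a chordless cycle has no internal edges to split along, so one must bring in edges between different $C_i$'s, and controlling those is again the heart of the original argument. What you have written is a correct framing, but not yet a proof.
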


Justesen inproved Theorem \ref{C and H} as follows.

\begin{theorem}\label{JUS}(Justesen \cite{just}) Suppose that $|G| \geq 3k$ and $\sigma_2(G) \geq 4k$.  Then $G$ contains $k$ disjoint cycles.
\end{theorem}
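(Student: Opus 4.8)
The plan is to run an extremal argument in the style of Corr\'adi--Hajnal and to invoke Theorem \ref{C and H} itself to dispose of the principal case. The starting observation is that $\sigma_2(G)\ge 4k$ differs from the hypothesis $\delta(G)\ge 2k$ of Theorem \ref{C and H} only through the low-degree vertices: writing $L=\{v\in V(G):d_G(v)\le 2k-1\}$, any two non-adjacent vertices of $L$ would have degree sum at most $4k-2<4k$, so $L$ is a clique. If $L=\emptyset$ then $\delta(G)\ge 2k$ and Theorem \ref{C and H} already yields $k$ disjoint cycles; hence I may assume $L\neq\emptyset$. Moreover, if $v\in L$ is a vertex of minimum degree then every vertex outside $\{v\}\cup N_G(v)$ has degree at least $4k-d_G(v)\ge 2k+1$, so in fact $L\subseteq\{v\}\cup N_G(v)$ and $|L|\le d_G(v)+1\le 2k$: all of the troublesome vertices are confined to one closed neighborhood and form a clique of bounded size.

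Assume for contradiction that $G$ has no $k$ disjoint cycles. I would take a family $\mathcal{C}=\{C_1,\dots,C_r\}$ of disjoint cycles with $r$ as large as possible, so $r\le k-1$, and, subject to this, with $\sum_{i}|C_i|$ minimum; set $H=G-\bigcup_i V(C_i)$. Maximality of $r$ forces $H$ to be acyclic, i.e.\ a forest, and minimality of the total length forces each $C_i$ to be chordless and, after the usual reductions, to be a triangle, so that $|H|=|G|-3r\ge 3k-3(k-1)=3$. Because $H$ is a forest on at least three vertices it contains two non-adjacent vertices $x,y$ with $d_H(x),d_H(y)\le 1$; almost all of the degree of $x$ and $y$ is therefore spent on the $r$ triangles of $\mathcal{C}$, while $\sigma_2(G)\ge 4k$ forces $d_G(x)+d_G(y)\ge 4k$. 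The aim is to bound the number of edges from $\{x,y\}$ to $\bigcup_i V(C_i)$ by a quantity strictly below $4k-2$ and so reach a contradiction.

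The heart of the matter, and the step I expect to be hardest, is proving the attachment restrictions that make this count work: I must show that a pair of leftover vertices cannot be too heavily joined to the triangles of $\mathcal{C}$, for otherwise one could reroute the cycles together with the two leftover vertices into a strictly larger family, contradicting the maximality of $r$. Unlike the insertion of a single vertex into a single cycle---which can never raise the number of cycles---these augmentations are genuinely global, combining freed cycle-vertices with edges inside the forest $H$, and this is exactly the content that Theorem \ref{C and H} packages for the minimum-degree case. The low-degree clique $L\subseteq\{v\}\cup N_G(v)$ is what lets me organize the remaining analysis: when $x$, $y$, or the relevant cycle vertices lie in $L$ their restricted degrees shrink, which only strengthens the count, whereas the complementary configurations push the local structure toward the clique $K_{2k-1}$ that saturates the weaker bound $\sigma_2\ge 4k-2$. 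Showing that the single unit of slack between $4k$ and this threshold rules out every such near-extremal configuration is where the real work of the proof lies.
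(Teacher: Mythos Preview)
The paper does not prove Theorem~\ref{JUS}; it is quoted in the introduction as a known result of Justesen and is immediately superseded by the sharper Theorem~\ref{E and W} of Enomoto and Wang. So there is no proof in the paper to compare your proposal against.

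On the substance of your sketch, there is a concrete gap. The assertion that ``minimality of the total length forces \dots\ each $C_i$ \dots\ to be a triangle'' is simply false: minimality only makes the cycles chordless and bounds the attachment of outside vertices (as in Lemma~\ref{lma}), it does not shrink every cycle to length~$3$. Without that, your computation $|H|=|G|-3r\ge 3$ is unjustified, and more importantly the intended bound ``edges from $\{x,y\}$ to $\bigcup_i V(C_i)$ are strictly below $4k-2$'' becomes much harder, since a long $C_i$ can absorb many edges from a single outside vertex only up to the limits in Lemma~\ref{lma}, but across $r\le k-1$ cycles the bookkeeping is delicate. You essentially acknowledge this yourself: the final paragraph says the ``real work'' is to rule out the near-extremal configurations, and no argument is given. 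In the actual proofs (Justesen, and more cleanly Enomoto and Wang), one does not assume all cycles are triangles; instead one fixes a longest path $P$ in $H$ and carefully analyzes $e(\{x,y\},C_i)$ for the two ends $x,y$ of $P$ against each $C_i$, using swap arguments that either produce an extra cycle or a shorter family. Your observation that the low-degree set $L$ is a clique contained in a single closed neighborhood is correct and is a sensible way to reduce to Theorem~\ref{C and H} when $L=\emptyset$, but it does not by itself organize the remaining case: vertices of $L$ can sit on the cycles rather than in $H$, so their being a clique does not directly bound $e(\{x,y\},\mathbf{C})$. As written, the proposal is an outline with the decisive step missing rather than a proof.
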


The degree condition in Theorem \ref{JUS} is not sharp. Later, Enomoto and Wang independently improved Theorem \ref{JUS} and got a sharp degree bound.

\begin{theorem}\label{E and W}(Enomoto \cite{enomoto}, Wang \cite{wang}) Suppose that $|G| \geq 3k$ and $\sigma_2(G) \geq 4k - 1$. Then $G$ contains $k$ disjoint cycles.
\end{theorem}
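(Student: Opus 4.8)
The plan is to argue by contradiction with an \emph{extremal family} of disjoint cycles rather than by a naive induction on $k$: removing one triangle can cost up to $6$ in a degree sum, whereas passing from $k$ to $k-1$ only lowers the hypothesis by $4$, and closing that gap of $2$ is exactly what makes the sharp bound $4k-1$ (as opposed to Justesen's $4k$) delicate. So I would take a counterexample $G$ with $|V(G)|$ minimum. By Theorem~\ref{C and H} we may assume $\delta(G)\le 2k-1$, and by Theorem~\ref{JUS} we may assume $\sigma_2(G)=4k-1$ exactly, so that equality is attained by some non-adjacent pair. Fix a family $\mathcal{C}=\{C_1,\dots,C_r\}$ of disjoint cycles with $r$ maximum and, subject to that, with $\sum_i|C_i|$ minimum. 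Maximality of $r$ forces $H:=G-\bigcup_i V(C_i)$ to be a forest, and since $G$ has no $k$ disjoint cycles, $r\le k-1$; the first routine goal is to show $H\ne\emptyset$ and is large, using $|V(G)|\ge 3k$.

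Second, I would establish the structural lemmas driving the count. Minimality of $\sum_i|C_i|$ makes each $C_i$ chordless, since a chord yields a strictly shorter cycle on the same vertices while keeping $r$ fixed. The key adjacency lemma is a rerouting argument: if $v\in V(H)$ has two neighbours on $C_i$ splitting it into arcs of lengths $p$ and $q$, then routing through $v$ gives a cycle of length $\min(p,q)+2$, which is shorter than $|C_i|$ as soon as $|C_i|\ge 5$; this again contradicts minimality, so $d_{C_i}(v)\le 1$ whenever $|C_i|\ge 5$, and the positions of neighbours are tightly constrained when $|C_i|\in\{3,4\}$. Alongside this I would record the augmentation moves: a cycle of length $\ge 4$ together with two vertices of $H$ whose neighbours are suitably placed can be reorganised into two disjoint cycles, raising $r$ and contradicting its maximality.

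Third comes the counting. Being a forest, $H$ has at least two vertices of degree $\le 1$, and among its low-degree vertices I can pick a pair $u,v$ non-adjacent in the induced subgraph $H$, hence non-adjacent in $G$ (the alternative, that all low-degree vertices are pairwise adjacent, forces $|V(H)|\le 2$ and is treated separately). Then $\sigma_2(G)\ge 4k-1$ gives $\sum_i\big(d_{C_i}(u)+d_{C_i}(v)\big)\ge 4k-1-\big(d_H(u)+d_H(v)\big)\ge 4k-3$, while the adjacency lemma caps each summand by $2$ on long cycles, $4$ on $4$-cycles, and $6$ on triangles. Averaging over the $r\le k-1$ cycles then forces some cycle to absorb many edges from $\{u,v\}$, and the augmentation moves convert this into either a cycle inside $H$ or a reconfiguration producing $k$ disjoint cycles --- the desired contradiction.

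The hard part will be the regime in which every $C_i$ is a triangle. There the per-cycle bound is weakest (a vertex of $H$ may be joined to all three vertices of a triangle), the simplest augmentation fails because five vertices cannot contain two disjoint cycles, and the inequality $\sigma_2\ge 4k-1$ is essentially tight. Disposing of this case needs a finer global exchange argument that rotates vertices between $H$ and the triangles while exploiting the forest structure of $H$, supplemented by direct verification of the smallest values of $k$ and $|V(G)|$; this is precisely where the saving from $4k$ to the sharp $4k-1$ is secured.
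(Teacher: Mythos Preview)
The paper does not contain a proof of this statement: Theorem~\ref{E and W} is cited from the literature (Enomoto~\cite{enomoto}, Wang~\cite{wang}) as background, and the paper's own work proves only Theorem~\ref{M and Y} (the case $t\ge 5$ of Conjecture~\ref{Gould}). So there is no in-paper proof against which to compare your proposal.

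That said, your outline follows the standard strategy that the cited papers (and indeed the present paper, for its own theorem) use: take an extremal family of disjoint cycles minimizing total length, observe the complement is a forest, bound $e(v,C_i)$ for $v$ outside via the minimality (this is exactly Lemma~\ref{lma} here), and then run a counting argument on a well-chosen non-adjacent pair from the forest. The broad shape is right. Two cautions about places where your sketch is not yet a proof: first, your ``augmentation moves'' are asserted but not specified, and the actual case analysis (which pairs of neighbours on a $C_i$ allow two disjoint cycles from $C_i\cup\{u,v\}$, and when instead one only gets a shorter cycle) is the substance of the argument and cannot be waved at; second, your final paragraph correctly identifies the all-triangles regime as the hard case but offers no mechanism for it beyond ``a finer global exchange argument'' --- that is precisely where Enomoto and Wang do real work, and without it the sketch does not yield the sharp bound $4k-1$.
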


In 2006, Fujita, Matsumura, Tsugaki and Yamashita \cite{fujita} gave a  sharp degree sum condition on three independent vertices by prove the following theorem.

\begin{theorem}\label{Fujita}(Fujita et al. \cite{fujita}) Suppose that $k \geq 2$, $|G| \geq 3k + 2$. If $\sigma_3(G) \geq 6k - 2$, then $G$ contains $k$ disjoint cycles.
\end{theorem}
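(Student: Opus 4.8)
The plan is to argue by contradiction, combining a minimum-degree dichotomy with an extremal choice of cycles. First, if $\delta(G) \ge 2k$, then since $|G| \ge 3k+2 > 3k$, Theorem \ref{C and H} already yields $k$ disjoint cycles. So I may fix a vertex $u$ with $d_G(u) = \delta(G) \le 2k-1$. The point of isolating such a $u$ is that for any two vertices $v,w \notin N_G[u]$ with $vw \notin E(G)$ the set $\{u,v,w\}$ is independent, whence $d_G(v) + d_G(w) \ge \sigma_3(G) - d_G(u) \ge (6k-2) - (2k-1) = 4k-1$. Thus on the non-neighbourhood of $u$ the hypothesis degrades to the Enomoto--Wang threshold $\sigma_2 \ge 4k-1$ of Theorem \ref{E and W}. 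This suggests the heuristic strategy of producing $k-1$ disjoint cycles in a subgraph that avoids a small neighbourhood of $u$ and then closing a final cycle through the low-degree vertex $u$.

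To make this rigorous I would assume $G$ has no $k$ disjoint cycles, choose a family $\mathcal{C} = \{C_1,\dots,C_s\}$ of disjoint cycles of maximum size $s \le k-1$ and, subject to that, of minimum total length $\sum_i |C_i|$, and set $R = G - \bigcup_i V(C_i)$. Since adding any cycle of $R$ would give $s+1$ disjoint cycles, $R$ is a forest; and $|R| \ge 3k+2 - \sum_i |C_i|$. Minimality of the total length forces each $C_i$ to be chordless, so a vertex of $R$ adjacent to two non-consecutive vertices of some $C_i$ would create a strictly shorter cycle, again contradicting minimality. Pushing these exchange arguments together with the degree bound should reduce to the case $s = k-1$ with every $C_i$ a triangle; this is where the $\sigma_3$ hypothesis and the stronger order bound $3k+2$ (rather than $3k$) are spent.

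The heart of the proof is then a counting-and-rerouting step. With all $C_i$ triangles we have $\sum_i |C_i| = 3(k-1)$ and hence $|R| \ge 5$; as $R$ is a forest it is bipartite and contains three pairwise non-adjacent vertices, which I would take to be leaves or isolated vertices of $R$, so that each has degree at most one inside $R$. These vertices $x,y,z$ are independent, so $d_G(x)+d_G(y)+d_G(z) \ge 6k-2$, leaving at least $6k-5$ edges from $\{x,y,z\}$ to $\bigcup_i V(C_i)$; distributing these among the $k-1$ triangles, some triangle $T=abc$ receives a very dense attachment from the forest. The main obstacle here is that a single triangle together with three independent vertices spans only six vertices and can never split into two disjoint cycles: two disjoint cycles on six vertices must be two triangles, i.e. a partition into two triples, and any such partition puts at least two of $x,y,z$ into one triple, which is then not a triangle. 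Consequently the extra cycle must be built globally — one reroutes a path of $R$ through $T$, or absorbs an additional forest vertex, trading $T$ for two disjoint cycles while keeping the other $C_j$ intact and, where needed, anchoring the final cycle at $u$. Carrying out this exchange under the worst-case distribution of the $6k-5$ edges is the decisive difficulty.

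Finally I would settle the boundary cases. The bound $\sigma_3(G) \ge 6k-2$ is sharp, the extremal graph being $K_{2k-1} + \overline{K_m}$: three independent vertices of the independent part give $\sigma_3 = 6k-3$, while every cycle consumes at least two of the $2k-1$ dominating vertices, so at most $k-1$ disjoint cycles exist. The analysis must therefore track equality carefully and show that any configuration surviving all the exchange arguments is forced into this extremal shape, contradicting $\sigma_3 \ge 6k-2$. The smallest value $k=2$ (where the hypotheses read $|G|\ge 8$ and $\sigma_3 \ge 10$) I would verify directly as the base of the exchange analysis. I expect the rerouting of the previous paragraph, and the bookkeeping needed to guarantee that the freed vertices really do close into a $k$-th disjoint cycle, to be the hard part; the chordless/triangle reductions and the extremal pin-down are technical but routine by comparison.
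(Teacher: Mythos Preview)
The paper does not prove this theorem. Theorem~\ref{Fujita} is quoted from Fujita, Matsumura, Tsugaki and Yamashita \cite{fujita} as background, and no proof is supplied here; the only traces of the original argument visible in this paper are the imported Lemmas~\ref{lma}, \ref{lmb} and \ref{lmd}. So there is no ``paper's own proof'' to compare your proposal against.

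That said, two comments on your sketch. First, the imported lemmas indicate that the Fujita et al.\ argument runs through a minimal family of $k-1$ cycles and the forest $H$ outside it, using the bounds $e(x,C_i)\le 3$ (Lemma~\ref{lma}) together with the two structural lemmas (\ref{lmb}, \ref{lmd}) on three leaves against a triangle or a cycle; the $\delta(G)\ge 2k$ / low-degree-vertex dichotomy you open with is not part of that machinery and does not obviously help, since the Enomoto--Wang bound you extract only constrains pairs in $V(G)\setminus N_G[u]$, not the forest $H$ you later work in. Second, what you present is explicitly a plan rather than a proof: the decisive step --- turning a triangle $T$ that receives many edges from three independent leaves of $H$ into two disjoint cycles by rerouting through $H$ --- is precisely the content of Lemmas~\ref{lmb} and \ref{lmd}, and you have left it as ``the hard part''. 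Until that rerouting is actually carried out (and the reduction to $s=k-1$ with all $C_i$ triangles is justified, which is not automatic from minimality alone), the proposal is an outline, not a proof.
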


Recently, Gould, Hirohata and Keller proposed a  more general conjecture.

\begin{conjecture}\label{Gould}(\cite{conj}) Let $G$ be a graph of sufficiently large order. If $\sigma_t(G) \geq 2kt - t + 1$ for any two integers $k \geq 2$ and $t \geq 4$, then $G$ contains $k$ disjoint cycles.
\end{conjecture}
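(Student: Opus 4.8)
The plan is to argue by contradiction. Suppose $G$ has sufficiently large order, $\sigma_t(G)\ge 2kt-t+1$ with $t\ge 5$, yet $G$ has no $k$ disjoint cycles; I will then produce $t$ pairwise non-adjacent vertices whose degrees sum to at most $2kt-t$, contradicting the hypothesis. Two extremal families guide the whole argument: $K_{2k-1}+\overline{K_m}$ and the triangle-free $K_{2k-1,m}$. Neither contains $k$ disjoint cycles, because every cycle must use at least two vertices of the ``small'' side, so $k$ disjoint cycles would need $2k>2k-1$ of them; and in both graphs $t$ vertices of the large independent side give $\sigma_t=t(2k-1)=2kt-t$, exactly one below the bound. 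The proof must therefore show these are essentially the only obstructions, and it must simultaneously accommodate the triangle pattern and the $4$-cycle (triangle-free) pattern.

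Among all families of $k-1$ disjoint cycles I would fix one, $\mathcal{C}=\{C_1,\dots,C_{k-1}\}$, that first minimizes the total order $\sum_i|C_i|$ and, subject to that, maximizes $e(H)$, where $H=G-V(\mathcal{C})$. Since $G$ has no $k$ disjoint cycles, any cycle of $H$ would be disjoint from all $C_i$; hence $H$ is a forest. A chord of some $C_i$ would split it into a strictly shorter cycle disjoint from the others, so minimality makes every $C_i$ chordless, and a further exchange analysis shows each $C_i$ is short (in the extremal regime one proves $|C_i|\in\{3,4\}$, matching the two extremal families). Thus $|V(\mathcal{C})|\le 4(k-1)$ is bounded in terms of $k$ alone, and because $|G|$ is ``sufficiently large'' the forest $H$ is large; being bipartite it has an independent set $I$ with $|I|\ge|H|/2\gg t$.

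The decisive step is a dichotomy controlling $d_{\mathcal{C}}(v)=d_G(v)-d_H(v)$ for $v\in H$. If some $t$-subset of $I$ had small attachment to the cycles, then, since these vertices already have small $d_H$ in the forest, their degrees would sum to at most $2kt-t$ and we would be done. Hence I may assume that most vertices of $H$ attach heavily to $\mathcal{C}$. But a vertex of $H$, or a short path of $H$, with several neighbours on a single $C_i$ can be used to reroute $C_i$ through $H$: this either shortens $C_i$ (contradicting minimality of the total order) or detaches a segment of $C_i$ as an extra disjoint cycle (contradicting that $\mathcal{C}$ cannot be extended to $k$ cycles). Iterating this insertion/exchange argument across all $k-1$ cycles forces $d_{\mathcal{C}}(v)\le 2(k-1)$ for the relevant vertices, with equality only in the rigid $K_{2k-1}+\overline{K_m}$- or $K_{2k-1,m}$-type configuration; a direct count then extracts $t$ independent vertices of degree at most $2k-1$, giving $\sigma_t\le 2kt-t$.

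The main obstacle is exactly this control of $d_{\mathcal{C}}(v)$ for vertices that attach to several cycles at once, together with the delicate near-extremal configurations in which a large independent set is packed against the small dense core; the bookkeeping must separate the triangle and $4$-cycle cases and rule out mixed reroutings. It is here that $t\ge 5$ provides the slack: the large forest supplies far more than $t$ candidate low-degree vertices, so one can always discard the few ``heavy'' vertices and still assemble an independent $t$-set beating the bound, whereas for $t=2,3$ the margin is too thin and one relies instead on the sharp Theorems~\ref{E and W} and~\ref{Fujita}. Finally, one may assume throughout that $\delta(G)\le 2k-1$ (otherwise Theorem~\ref{C and H} applies at once) and that $\alpha(G)\ge t$ (otherwise $G$ is dense enough to contain $k$ disjoint cycles by a direct argument), which is what seeds the forest analysis with a genuine supply of low-degree vertices.
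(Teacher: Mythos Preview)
Your proposal has a genuine gap at the central counting step. You assert that the insertion/exchange argument forces $d_{\mathcal C}(v)\le 2(k-1)$ for the relevant $v\in H$, but minimality of $\mathcal C$ only yields $e(v,C_i)\le 3$ (Lemma~\ref{lma}), with equality possible whenever $C_i$ is a triangle; so the honest bound is $d_{\mathcal C}(v)\le 3(k-1)$. Combined with $d_H(v)\le 1$ for a leaf, this gives $d_G(v)\le 3k-2$, and summing over any $t$ independent leaves yields at most $t(3k-2)$, which exceeds $2kt-t$ for every $k\ge 2$. Thus no contradiction emerges from your dichotomy, and the extremal family $K_{2k-1}+\overline{K_m}$ you yourself cite is exactly a configuration in which $H$-vertices attach with degree $3$ to triangles. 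The auxiliary claim that minimality alone forces $|C_i|\in\{3,4\}$ for every $i$ is likewise unsupported: Lemma~\ref{lma} gives $|C_i|\le 4$ only \emph{after} one has produced some $v\in H$ with $e(v,C_i)\ge 2$, which is not automatic for each $i$.

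The paper's proof avoids any per-vertex degree bound. It takes \emph{two} disjoint independent $t$-sets $X_1,X_2\subset H$ (available once $|H|\ge 3t-1$), chooses $I=X_1\cup X_2$ to contain all leaves of $H$, and bounds $d_H(I)\le 4t-2\omega(H)$ (Lemma~\ref{lemk}). Pigeonholing the total degree $\ge 2(2kt-t+1)$ onto a single cycle $C\in\mathcal C$ gives $e(I,C)\ge 2t+2\omega+2$, which is strong enough to feed the structural Lemmas~\ref{lemma}--\ref{lemmc}: two $H$-components each sending several edges to the same short $C$ already yield two disjoint cycles in $G[H\cup C]$. The remaining case $|H|\le 3t-2$ is handled separately by analysing the longest cycle in $\mathcal C$ via Lemma~\ref{lmg}. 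The idea you are missing is this concentration onto one cycle together with multi-vertex rerouting; a bound of the form $d_G(v)\le 2k-1$ for $t$ independent vertices simply cannot be extracted from minimality of $\mathcal C$.
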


They showed that the degree sum condition conjectured above is sharp. Sharpness is given by $G = K_{2k-1}+mK_1$. The only independent vertices in $G$ are those in $mK_1$. Each of these vertices has degree $2k - 1$. Thus $\sigma_t(G) = t(2k - 1) = 2kt - t$ for any $4 \leq t \leq m$. Apparently, $G$ does not contain $k$ disjoint cycles as any cycle must contain two vertices of $K_{2k - 1}$. In the same paper, they also verified that the case $t = 4$ is correct, which adds evidence for this conjecture.

In this paper, we solve Conjecture \ref{Gould} for $t \geq 5$, by proving the following theorem.

\begin{theorem}\label{M and Y} Suppose that $k \geq 2$, $t \geq 5$ are two integers and $|G| \geq (2t - 1)k$. If $\sigma_t(G) \geq 2kt - t + 1$, then $G$ contains $k$ disjoint cycles.
\end{theorem}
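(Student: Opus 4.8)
The plan is to argue by contradiction, combining an induction on $k$ with the extremal method of choosing a maximal family of disjoint cycles. Assume $G$ satisfies the hypotheses but contains no $k$ disjoint cycles. The first step is to secure $k-1$ disjoint cycles. Since the hypotheses only strengthen when $k$ is decreased (as $2kt-t+1 > 2(k-1)t-t+1$ and $(2t-1)k \geq (2t-1)(k-1)$), the induction hypothesis applied with $k-1$ gives $k-1$ disjoint cycles whenever $k \geq 3$; the base case $k=2$ only requires that $G$ contain a single cycle, which holds because a forest on at least $(2t-1)k$ vertices always has an independent set of $t$ vertices of degree sum less than $2t \leq 2kt-t+1$, contradicting the degree condition. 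A second preliminary observation, used throughout, is that the set $L = \{v \in V(G) : d_G(v) \leq 2k-1\}$ of low-degree vertices satisfies $\alpha(G[L]) \leq t-1$, where $\alpha$ denotes the independence number: an independent set of $t$ vertices inside $L$ would have degree sum at most $t(2k-1) = 2kt-t < \sigma_t(G)$, which is impossible. Thus no $t$ independent vertices can all have low degree, which already signals that the bound is sharp (equality holds for $K_{2k-1}+mK_1$) and that the final counting must be delicate.

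Next, among all families of $k-1$ disjoint cycles I would choose one, $\{C_1,\dots,C_{k-1}\}$, minimizing the total length $\sum_i |C_i|$, and set $R = G - \bigcup_i V(C_i)$. Since a $k$-th disjoint cycle does not exist, $R$ is a forest. Minimality controls the neighbors of a leftover vertex on each cycle: if $x \in R$ had two neighbors on some $C_i$ whose shorter arc contains at most $|C_i|-3$ edges, then rerouting through $x$ would produce a strictly shorter cycle disjoint from the others, contradicting minimality; counting the arcs forces $d_{C_i}(x) \leq 2$ when $|C_i| \geq 4$ and $d_{C_i}(x) \leq 3$ when $C_i$ is a triangle. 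The deeper ingredient is a family of recombination lemmas: if a small set of pairwise non-adjacent vertices of $R$ has too many neighbors on one cycle $C_i$ (for instance two such vertices with $d_{C_i}(u) + d_{C_i}(v) \geq |C_i| + 2$ on a long cycle, or three such vertices sharing a triangle), then $G[V(C_i) \cup \{u,v,\dots\}]$ already contains two disjoint cycles, which together with the remaining $C_j$ would yield $k$ disjoint cycles, a contradiction. These lemmas bound $\sum_{x \in X} d_{C_i}(x)$ for any independent set $X \subseteq R$.

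With these bounds in hand, the final step is to exhibit an independent set $X \subseteq R$ with $|X| = t$ and $\sum_{x \in X} d_G(x) \leq 2kt-t$, contradicting $\sigma_t(G) \geq 2kt-t+1$. Because $R$ is a forest it is bipartite with independence number at least $|R|/2$ and average degree below $2$, so one can select $t$ independent vertices whose degrees inside $R$ are as small as possible (leaves whenever available), keeping $\sum_{x \in X} d_R(x)$ small. Writing $d_G(x) = d_R(x) + \sum_i d_{C_i}(x)$ and summing, the forest term contributes little while the recombination lemmas cap $\sum_i \sum_{x \in X} d_{C_i}(x)$ at essentially $2$ per cycle per vertex, i.e. close to $2t(k-1)$; reconciling this with the target $2kt-t = 2t(k-1) + t$ is exactly where the single unit of slack in the hypothesis is spent. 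To guarantee $|R| \geq t$ and to tighten the per-cycle contribution, I would first reduce to the case that every $C_i$ is a triangle (long cycles absorb at most one neighbor per leftover vertex and can be shortened or handled through the minimum-degree bound of Theorem \ref{C and H}), so that $|R| \geq (2t-1)k - 3(k-1) \geq t$.

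The main obstacle is the block of recombination lemmas and the accompanying case analysis, particularly for triangles, where splitting one cycle into two requires at least three leftover vertices and a careful study of their joint neighborhoods; here the constraint $\alpha(G[L]) \leq t-1$ must be invoked to force the high-degree vertices that $X$ is obliged to contain to be offset by genuinely low-degree ones, and the sharpness of the bound leaves essentially no room to spare. A secondary difficulty is the boundary regime in which the cycles are long and $R$ is small, which should be disposed of separately, for example by an auxiliary induction that lowers $t$ toward the known cases $t=4$ of Gould--Hirohata--Keller and $t=3$ of Theorem \ref{Fujita}, using that an independent $(t-1)$-set not extendable by a low-degree vertex forces many vertices of degree at least $2k$. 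Carrying the counting through all these configurations without losing the one unit of slack in $2kt-t+1$ is the crux of the proof.
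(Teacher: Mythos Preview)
Your overall framework---contradiction, $k-1$ minimal cycles, forest remainder, recombination lemmas---matches the paper, but the central counting step has a real gap. With a single independent $t$-set $X\subseteq R$, the best forest bound you can hope for (choosing $X$ to contain all leaves) is $d_R(X)\le 2t-2\omega(R)$, while the recombination lemmas cap $\sum_{x\in X} d_{C_i}(x)$ at $2t$ per cycle. This only yields $\sigma_t(G)\le 2t(k-1)+2t-2\omega(R)=2kt-2\omega(R)$, which is \emph{no contradiction} once $\omega(R)\le (t-1)/2$. The ``single unit of slack'' you mention is not a unit at all; the shortfall is $t-2\omega(R)$, and for a connected $R$ it is $t-2$.

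The paper closes this gap with two ideas you are missing. First, it imposes a secondary extremal condition: subject to minimality of $\sum|C_i|$, it also minimizes $\omega(R)$. Via Lemma~\ref{lmf} this forces $R$ to have at most $t-1$ leaves (hence $\omega(R)\le t-1$), because otherwise one can either find $k$ cycles or recombine to lower $\omega(R)$. Second---and this is the crux---in the main case $|R|\ge 3t-1$ the paper takes \emph{two} disjoint independent $t$-sets $X_1,X_2\subseteq R$ and works with $I=X_1\cup X_2$. Summing the two $\sigma_t$ inequalities and using $d_R(I)\le 4t-2\omega(R)$ gives $e(I,C)\ge 2t+2\omega(R)+2$ for some cycle $C$, and the extra $2\omega(R)+2$ is exactly what drives the subsequent structural analysis (Claims~\ref{claimd} and the ``star-like'' picture of $T_i$). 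When $|R|\le 3t-2$ the paper does not reduce to triangles as you suggest; instead it takes independent sets on the longest cycle $C$ and uses the degree-sequence Lemma~\ref{lmg} to find two shorter cycles inside $G[C\cup C']$. Your plan to ``reduce to all triangles'' and to fall back on the $t=3,4$ theorems is not how the argument is carried out and would need its own justification.
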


Other related results about disjoint cycles in graphs and bipartite graphs have been studied, we refer the reader see \cite{Jiao}, \cite{wang2}, \cite{wang1} and \cite{Yan}.

\medskip

{\bf Remark.} In the following, we introduce some useful notations. Let $X, Y$ be two vertex-disjoint subsets or subgraphs of $G$, $E(X, Y)$ denote the set of edges of $G$ joining a vertex in $X$ and a vertex in $Y$. If $X = \{x\}$, we denote $E(x, Y)$ instead of $E(\{x\}, Y)$. And denote $e(X, Y) = |E(X, Y)|$, $e(x, Y) = |E(x, Y)|$. For two disjoint subgraphs $H_1, H_2$ of $G$, $(d_1, \ldots , d_n)$ (where $d_1 \geq \cdots \geq d_n$) is a degree sequence from $H_1$ to $H_2$ if there exist $n$ vertices $v_1, \ldots , v_n$ in $H_1$ such that $e(v_i, H_2) \geq d_i$ for each $1 \leq i \leq n$.

A forest is a graph each of whose components is a tree. A leaf is a vertex of a forest whose degree is at most 1.

\section{Lemmas}

To prove Theorem \ref{M and Y}, we make use of the following lemmas.

Let $C_1, \ldots , C_k$ be $k$ disjoint cycles of a graph $G$. If $C'_1, \ldots , C'_k$ are $k$ disjoint cycles of $G$ and $|\cup^k_{i = 1}V(C'_i)| < |\cup^k_{i = 1}V(C_i)|$, then we call $\{C'_1, \ldots , C'_k\}$ shorter cycles than $\{C_1, \ldots , C_k\}$. We also call $\{C_1, \ldots , C_k\}$  minimal if $G$ does not contain $k$ disjoint cycles $C'_1, \ldots , C'_k$ such that $|\cup^k_{i = 1}V(C'_i)| < \cup^k_{i = 1}V(C_i)|$.

\begin{lemma}\label{lma} (\cite{fujita}) Let $k$ be a positive integer and $C_1, \ldots , C_k$ be $k$ disjoint cycles of a graph $G$. If $\{C_1, \ldots , C_k\}$ is minimal, then $e(x, C_i) \leq 3$ for any $x \in V(G) - \cup^k_{i = 1}V(C_i)$ and for any $1 \leq i \leq k$. Furthermore, $e(x, C_i) = 3$ implies $|C_i| = 3$ and $e(x, C_i) = 2$ implies $|C_i| \leq 4$.
\end{lemma}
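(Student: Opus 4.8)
The plan is to argue by contradiction using the minimality of $\{C_1, \ldots, C_k\}$: whenever $x$ has too many or too tightly clustered neighbours on a cycle $C_i$, I would reroute part of $C_i$ through $x$ to produce $k$ disjoint cycles spanning strictly fewer vertices. The single mechanism driving everything is the following rerouting observation. Fix $i$, write $\ell = |C_i|$, and suppose $x$ is adjacent to two distinct vertices $v_a, v_b \in V(C_i)$. The pair $v_a, v_b$ splits $C_i$ into two arcs; let $P$ be the arc with fewer internal vertices, say $s$ of them, and form the cycle $C_i' = x\,v_a\,P\,v_b\,x$ obtained by joining $x$ to the two ends of $P$. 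Then $C_i'$ is a genuine cycle with $|C_i'| = s + 3$, and since $x \notin \bigcup_{j} V(C_j)$ while $V(C_i') \subseteq \{x\} \cup V(C_i)$, the family $\{C_1, \ldots, C_{i-1}, C_i', C_{i+1}, \ldots, C_k\}$ again consists of $k$ disjoint cycles.

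First I would record the numerical consequence of minimality. Replacing $C_i$ by $C_i'$ changes the total number of spanned vertices by $(s+3) - \ell$, so if $s + 3 < \ell$ the new family is shorter, contradicting minimality. Hence any two neighbours of $x$ on $C_i$ must satisfy $s \geq \ell - 3$, where $s$ counts the internal vertices on the shorter arc between them. On the other hand, the two arcs partition the $\ell - 2$ non-endpoint vertices of $C_i$, so the shorter arc always obeys $s \leq \lfloor (\ell - 2)/2 \rfloor$. Combining the two bounds, the mere existence of two neighbours of $x$ on $C_i$ forces $\ell - 3 \leq \lfloor (\ell - 2)/2 \rfloor$, i.e. $\ell \leq 4$.

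Then I would read off the three assertions by cases on $\ell$. For $\ell \geq 5$ the displayed inequality fails, so $x$ cannot have two neighbours on $C_i$ and $e(x, C_i) \leq 1$. For $\ell = 4$ the inequality is tight and forces $s = 1$ for every pair of neighbours, i.e. every such pair is an opposite (distance-two) pair on $C_4$; since any three vertices of $C_4$ contain an adjacent pair, this gives $e(x, C_4) \leq 2$. For $\ell = 3$ no constraint survives and trivially $e(x, C_3) \leq 3$. These three facts are precisely $e(x, C_i) \leq 3$ in general, $e(x, C_i) = 3 \Rightarrow \ell = 3$, and $e(x, C_i) = 2 \Rightarrow \ell \leq 4$, which is the statement of the lemma.

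The only places needing care are routine: that $C_i'$ is a valid cycle when $s = 0$ (it degenerates to the triangle $x v_a v_b x$, using the cycle edge $v_a v_b$), and that the replacement preserves disjointness from the remaining $C_j$. I do not expect a genuine obstacle; the entire content is the rerouting inequality $s \geq \ell - 3$ paired with the counting bound $s \leq \lfloor (\ell-2)/2 \rfloor$, and the main thing to get right is the bookkeeping of arc lengths so that \emph{shorter arc} and \emph{strictly fewer vertices} are applied consistently.
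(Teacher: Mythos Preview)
Your argument is correct; the rerouting inequality $s \ge \ell - 3$ together with $s \le \lfloor(\ell-2)/2\rfloor$ is exactly the standard mechanism, and your case analysis for $\ell \ge 5$, $\ell = 4$, $\ell = 3$ is clean. Note that the present paper does not supply its own proof of this lemma at all: it is quoted from \cite{fujita} and used as a black box, so there is no in-paper argument to compare against. Your write-up is the expected proof one would find in the cited source.
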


\begin{lemma}\label{lmb} (\cite{fujita}) Suppose that $F$ is a forest with at least two components and $C$ is a triangle. Let $x_1, x_2, x_3$ be leaves of $F$ from at least two components. If $e(\{x_1, x_2, x_3\}, C) \geq 7$, then there are two disjoint cycles in $G[F \cup C]$ or there exists a triangle $C'$ in $G[F \cup C]$ such that $\omega(G[F \cup C] - C') < \omega(F)$.
\end{lemma}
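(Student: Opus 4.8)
The plan is to exploit the fact that a triangle has only three vertices, which makes the hypothesis $e(\{x_1,x_2,x_3\},C)\ge 7$ extremely rigid. First I would record that $e(x_i,C)\le 3$ for each $i$, so that $7\le e(\{x_1,x_2,x_3\},C)\le 9$; thinking of the $x_i$--$C$ adjacencies as a $3\times 3$ array, \emph{at most two} of the nine possible edges are absent. Writing $C=c_1c_2c_3$, two consequences drive the whole argument: at least one vertex of $C$ is adjacent to all three of $x_1,x_2,x_3$ (a ``full'' vertex, say $c_1$), since two missing edges cannot spoil all three columns; and, by the same scarcity, any single column of $C$ has at most two absent edges, so every vertex of $C$ meets at least one leaf.

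The main line of attack for the second alternative is the following reduction. I would try to pick a leaf $x_i$ that is adjacent to both vertices of $C-c_1$ and whose deletion leaves the other two leaves in distinct trees of $F$. Then $C'=x_i\,c_2\,c_3$ is a triangle, and in $G[F\cup C]-C'$ the surviving full vertex $c_1$ is adjacent to the two remaining leaves. Since $x_i$ is a leaf, passing from $F$ to $F-x_i$ does not increase the number of components, and then $c_1$ merges the two distinct trees containing the surviving leaves; hence $\omega(G[F\cup C]-C')\le \omega(F)-1$, as required. When $x_1,x_2,x_3$ lie in three distinct trees this is always available: with $c_1$ full, all at most two missing edges lie in columns $c_2,c_3$, so they spoil at most two leaves and some $x_i$ is adjacent to both $c_2$ and $c_3$; that $x_i$ does the job.

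The hard part is the case where two leaves, say $x_a,x_b$, lie in one tree $T$ and the third, $x_c$, in another. Here the reduction can be obstructed; inspecting where the (at most two) missing edges sit shows this happens only when both missing edges go from $\{x_a,x_b\}$ to $C$, which forces $x_a$ and $x_b$ to share a common neighbour $c_1$ in $C$ while $x_c$, having no missing edge, is adjacent to the remaining two vertices $c_2,c_3$. In that situation I would instead build two disjoint cycles directly: letting $P$ be the $x_a$--$x_b$ path in $T$, the cycle $x_a\,P\,x_b\,c_1\,x_a$ and the triangle $x_c\,c_2\,c_3$ are vertex-disjoint, the first living in $T\cup\{c_1\}$ and the second in $\{x_c,c_2,c_3\}$, which gives the first alternative.

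I expect the genuinely delicate point to be the bookkeeping of this last case: one must verify that every placement of the at most two absent edges either leaves the component-reducing construction available (as when both missing edges fall on a single leaf, or on a leaf and $x_c$) or else forces exactly the adjacency pattern above that yields the two disjoint cycles. Because so few edges are missing, this is a finite and transparent check, but it has to be carried out so that no sub-configuration is overlooked. Degenerate situations, such as a tree consisting of a single vertex, only make $\omega(F)$ drop faster after deletion and so cause no additional difficulty.
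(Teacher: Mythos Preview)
The paper does not prove this lemma; it is quoted from \cite{fujita} and stated without proof, so there is no argument in the paper to compare against.

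Your approach is sound and the bookkeeping checks out. The key observations---that some vertex $c_1$ of $C$ is joined to all three leaves, and that the at most two missing edges spoil at most two of the rows---are correct, and your reduction via the triangle $C'=x_ic_2c_3$ together with $c_1$ bridging the two surviving leaves does yield $\omega(G[F\cup C]-C')\le\omega(F)-1$. In the two-trees case your analysis is right: the reduction fails precisely when each of $x_a,x_b$ misses exactly one edge to $C$ (hence $x_c$ misses none), and then the common neighbour $c_1$ of $x_a,x_b$ gives the cycle $x_a P x_b c_1 x_a$ in $T\cup\{c_1\}$ while $x_c c_2 c_3$ is a disjoint triangle. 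One small wording point: your phrase ``only when both missing edges go from $\{x_a,x_b\}$'' technically also covers the pattern $m_a=2$, $m_b=0$, where the reduction in fact still succeeds (take $x_i=x_b$); but since your two-cycle construction also works there (they still share a neighbour and $x_c$ is full), the argument is unaffected. The degenerate cases (a leaf forming a singleton tree) only help, as you note.
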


\begin{lemma}\label{lmc} Suppose that $F$ is a forest with at least two components, $C$ is a triangle and $t \geq 3$ is an integer. Let $x_1, x_2, \ldots, x_t$ be leaves of $F$ from at least two components. If $e(\{x_1, x_2, \ldots , x_t\}, C) \geq 2t + 1$, then there are two disjoint cycles in $G[F \cup C]$ or there exists a triangle $C'$ in $G[F \cup C]$ such that $\omega(G[F \cup C] - C') < \omega(F)$.
\end{lemma}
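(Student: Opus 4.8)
The plan is to prove Lemma~\ref{lmc} by induction on $t$, with Lemma~\ref{lmb} serving as the base case $t=3$ (note that $2\cdot 3+1=7$). Write $C=v_1v_2v_3$ and put $d_i=e(x_i,C)\le 3$. For the inductive step I assume $t\ge 4$ and that the statement holds for $t-1$; the goal is to discard one leaf of small degree to $C$ and invoke the induction hypothesis on the remaining $t-1$ leaves.

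Call a leaf $x_i$ \emph{deletable} if $d_i\le 2$ and the other $t-1$ leaves still lie in at least two components of $F$. If some $x_i$ is deletable, then the surviving leaves satisfy $e(\{x_1,\dots,x_t\}\setminus\{x_i\},C)\ge(2t+1)-2=2(t-1)+1$ while still lying in at least two components, so the induction hypothesis applied to them gives precisely the conclusion of the lemma. Hence everything reduces to the case in which no leaf is deletable, and this is where the real work lies.

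In the non-deletable case I would first argue that at most one leaf can have degree at most $2$: if two leaves $x_a,x_b$ both had $d\le 2$, then deleting $x_a$ would force all remaining leaves (in particular $x_b$) into a single component, and symmetrically for $x_b$; since $t\ge 4$ a third leaf lies in both components, so they coincide and all $t$ leaves share one component, contradicting the hypothesis. Thus either every $d_i=3$, or exactly one leaf, say $x_t$, has $d_t\le 2$ with $d_1=\dots=d_{t-1}=3$ and with $x_1,\dots,x_{t-1}$ all in one component $T$. Whenever two of the degree-$3$ leaves, say $x_b,x_c$, lie in a common component --- which is automatic in the second configuration and occurs in the first unless the leaves occupy $t$ distinct components --- I take a third degree-$3$ leaf $x_a$ and form the triangle $x_av_1v_2$ together with the cycle through $x_b,v_3,x_c$ and the $x_b$--$x_c$ path $P$ in $F$; as $x_a$ is a leaf it cannot lie on $P$, so these two cycles are disjoint.

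The remaining possibility is that all $t$ leaves have degree $3$ and lie in pairwise distinct components. Here a pair of disjoint cycles is generally unavailable, and I would instead produce the component-reducing triangle promised by the second alternative: picking leaves $x_a,x_b,x_c$ in three different components, the triangle $C'=x_cv_1v_2$ satisfies $\omega(G[F\cup C]-C')<\omega(F)$, since after deleting $C'$ the surviving vertex $v_3$ is joined to both $x_a$ and $x_b$ and so amalgamates their two components. I expect the main obstacle to be exactly this bookkeeping: proving that the naive induction fails only in the two rigid configurations above, and checking that the all-degree-$3$, distinct-components case genuinely needs the weaker ``component-reducing triangle'' conclusion rather than two disjoint cycles.
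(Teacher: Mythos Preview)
Your proof is correct and follows the same inductive scheme as the paper: base case $t=3$ via Lemma~\ref{lmb}, then drop a leaf of degree $\le 2$ to $C$ and invoke the hypothesis for $t-1$; when the remaining $t-1$ leaves fall into a single component, build two disjoint cycles directly from the degree-$3$ leaves there.

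The organization differs in two noteworthy ways. First, the paper simply says ``assume there is $x_i$ with $e(x_i,C)\le 2$'' and never returns to the complementary case where every $d_i=3$; your treatment of that case (your Case~A) is explicit, and in the sub-case where the $t$ degree-$3$ leaves sit in $t$ distinct components you invoke the second alternative of the conclusion (the component-reducing triangle $C'=x_cv_1v_2$, with $v_3$ gluing the components of $x_a$ and $x_b$), which is genuinely needed there since no two leaves share a tree-path. The paper never uses this alternative in the inductive step. Second, once the paper is in the ``remaining leaves lie in one tree $T$'' situation it still has to split on whether $T$ contains one or at least two degree-$3$ leaves, and in the former sub-case it does a small counting argument to force all other leaves to have degree exactly $2$ and then finds a common neighbour. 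Your preliminary claim that at most one leaf can have $d_i\le 2$ short-circuits this: in your Case~B you automatically have $t-1\ge 3$ degree-$3$ leaves in $T$, so the two-cycle construction is immediate. Both routes reach the same endpoint; yours is a bit tidier and plugs the gap the paper leaves at ``all $d_i=3$''.
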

\begin{proof} We prove by induction on $t$. The case $t = 3$ holds by Lemma \ref{lmb}. Suppose Lemma \ref{lmc} holds for all integers less than $t$. Now we prove the case $t$. Let $X = \{x_1, \ldots , x_t\}$. Assume there is a vertex $x_i \in X$ such that $e(x_i, C) \leq 2$. Then $X - \{x_i\}$ is a set of $t - 1$ leaves of $F$ and $e(X - \{x_i\}, C) \geq 2t - 1$. If $X - \{x_i\}$ comes from at least two components, by induction we are done. So $X - \{x_i\}$ is contained in one component $T$ of $F$.

Let $C = v_1v_2v_3v_1$. Since $e(X - \{x_i\}, C) \geq 2t - 1$, there exists a vertex $x_j \in X - \{x_i\}$ such that $e(x_j, C) \geq 3$. Because $|C| = 3$, $e(x_j, C) = 3$. Suppose there is another vertex $x_k \in X - \{x_i, x_j\}$ with $e(x_k, C) = 3$. Since $e(X - \{x_i\}, C) \geq 2t - 1$, it is easy to see that there is some $x_l \in X - \{x_i, x_j, x_k\}$ such that $e(x_l, C) \geq 1$. Because $x_j, x_k, x_l$ are three leaves from $T$, there is a path $P = x_k \cdots x_l$ in $T$ connecting $x_k$ and $x_l$ such that $x_j \notin V(P)$. Assume $x_lv_1$ is an edge, then $x_l \cdots x_k v_1 x_l$ and $x_jv_2v_3v_j$ are two disjoint cycles. Thus $e(x, C) \leq 2$ for all $x \in X - \{x_i, x_j\}$. It is not difficult to check that in this case $e(x, C) = 2$ for all $x \in X - \{x_i, x_j\}$. Now choose two vertices $x_k, x_l \in X - \{x_i, x_j\}$. Since $|C| = 3$, $x_k, x_l$ have a common neighbor, say $v_1$. Then $x_k \cdots x_lv_1x_k$ and $x_jv_2v_3x_j$ are two disjoint cycles.
\end{proof}

\begin{lemma}\label{lmd}(\cite{fujita}) Let $C$ be a cycle and $T$ be a tree with three leaves $x_1, x_2, x_3$. If $e(\{x_1, x_2, x_3\}, C) \geq 7$, then there exist two disjoint cycles in $G[C \cup T]$ or there exists a cycle $C'$ in $G[C \cup T]$ such that $|C'| < |C|$.
\end{lemma}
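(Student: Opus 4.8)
The plan is to split the argument according to the length of $C$, and to observe at the outset that since $e(\{x_1,x_2,x_3\},C)=\sum_{i=1}^{3}e(x_i,C)\ge 7$ is a sum of three terms, at least one leaf, say $x_1$, satisfies $e(x_1,C)\ge 3$.

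First I would dispose of the case $|C|\ge 4$. Here $x_1$ has three distinct neighbours on $C$, which partition $C$ into three arcs; let the shortest of these have $s$ edges, so $s\le |C|/3$. Rerouting through $x_1$ across the shortest arc produces a cycle $C'=x_1\,u\cdots v\,x_1$ of length $s+2$, where $u,v$ are the endpoints of that arc (this is the rerouting behind Lemma~\ref{lma}). Since $|C|\ge 4$ gives $s+2\le |C|/3+2<|C|$, the cycle $C'\subseteq G[C\cup T]$ is strictly shorter than $C$, which is the second alternative. Note this case never uses the tree beyond the single vertex $x_1$.

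The substantive case is $|C|=3$, where no cycle can be shorter than $C$, so the first alternative --- two disjoint cycles --- must be produced. Write $C=v_1v_2v_3v_1$. Now each $e(x_i,C)\le 3$, and since the sum exceeds $6$ some leaf, say $x_1$, is adjacent to all of $v_1,v_2,v_3$; consequently $e(x_2,C)+e(x_3,C)\ge 4>3=|V(C)|$, so $N_C(x_2)\cap N_C(x_3)\ne\varnothing$. Fix a common neighbour $v_a$ and let $\{v_b,v_c\}=V(C)\setminus\{v_a\}$. Let $P$ be the unique $x_2$--$x_3$ path in $T$. Then $v_a x_2 P x_3 v_a$ and $x_1 v_b v_c x_1$ are the candidate pair of cycles.

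The only real obstacle is to verify that these two cycles are vertex-disjoint. Their $C$-parts are disjoint because $v_a\notin\{v_b,v_c\}$; their $T$-parts are disjoint because $x_1$ is a leaf of $T$, hence of degree $1$, and therefore cannot lie on the path $P$ joining the two other leaves $x_2,x_3$. The triangle $x_1v_bv_cx_1$ is legitimate since $x_1$ is adjacent to every vertex of $C$ and $v_bv_c\in E(C)$, while $v_ax_2Px_3v_a$ is a genuine cycle of length at least $3$. I expect the degree bookkeeping in the $|C|=3$ case --- simultaneously guaranteeing a leaf adjacent to all of $C$ and a shared neighbour for the remaining two leaves --- to be the delicate point, but the budget $7$ is exactly what makes both demands satisfiable at once.
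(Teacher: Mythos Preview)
Your argument is correct. The paper does not supply its own proof of this lemma---it is quoted from \cite{fujita} without proof---so there is no in-paper argument to compare against. Your case split is the natural one: for $|C|\ge 4$ the shortest-arc rerouting through a leaf with three neighbours on $C$ is exactly the mechanism behind Lemma~\ref{lma}, and for $|C|=3$ the key observations (that one leaf is complete to $C$, that the other two share a neighbour on $C$ by the $7\ge 3+4$ count, and that a leaf cannot be an interior vertex of the $x_2$--$x_3$ path in $T$) are precisely what is needed to produce the two disjoint cycles.
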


\begin{lemma}\label{lme} Let $C$ be a cycle and $T$ a tree with $t$ leaves $x_1, x_2, \ldots , x_t$, where $t \geq 3$ is an integer. If $e(\{x_1, x_2, \ldots , x_t\}, C) \geq 2t + 1$, then there exist two disjoint cycles in $G[C \cup T]$ or there exists a cycle $C'$ in $G[C \cup T]$ such that $|C'| < |C|$.
\end{lemma}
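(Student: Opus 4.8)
The plan is to reduce directly to Lemma \ref{lmd}, the three-leaf version, by extracting from the $t$ leaves a sub-family of exactly three whose total number of edges to $C$ is already at least $7$. Since $T$ is a single tree, there is no component bookkeeping of the kind needed in Lemma \ref{lmc}: any three of the $x_i$ are again three leaves of the \emph{same} tree $T$, so Lemma \ref{lmd} applies verbatim and yields precisely the two desired alternatives, namely either two disjoint cycles in $G[C \cup T]$ or a cycle $C'$ with $|C'| < |C|$.

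To produce the three leaves, I would order them so that $e(x_1, C) \geq e(x_2, C) \geq \cdots \geq e(x_t, C)$ and verify that the three heaviest satisfy $e(x_1, C) + e(x_2, C) + e(x_3, C) \geq 7$. This is a short counting step: if that sum were at most $6$, then in particular $e(x_3, C) \leq 2$ (otherwise all three would have at least $3$ edges and the sum would be at least $9$), whence $e(x_i, C) \leq e(x_3, C) \leq 2$ for every $i \geq 3$; summing then gives $e(\{x_1, \ldots, x_t\}, C) \leq 6 + 2(t - 3) = 2t$, contradicting the hypothesis $e(\{x_1, \ldots, x_t\}, C) \geq 2t + 1$. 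Thus $x_1, x_2, x_3$ are three leaves of $T$ with $e(\{x_1, x_2, x_3\}, C) \geq 7$, and Lemma \ref{lmd} finishes the argument.

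Alternatively, one can mirror the induction on $t$ used in Lemma \ref{lmc}: the base case $t = 3$ is exactly Lemma \ref{lmd}, and for $t \geq 4$ either some $x_i$ has $e(x_i, C) \leq 2$, in which case the remaining $t - 1$ leaves satisfy $e \geq 2t - 1 = 2(t - 1) + 1$ and the induction hypothesis applies to the same tree $T$, or every $x_i$ has $e(x_i, C) \geq 3$ and any three of them already give $e \geq 9 \geq 7$ for Lemma \ref{lmd}. Both routes are elementary.

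There is no serious obstacle here. Unlike Lemma \ref{lmc}, whose conclusion constrains the number of components $\omega$ and therefore forces one to track which component the chosen leaves lie in (this is exactly why its inductive step must treat separately the case where the surviving leaves collapse into one component), Lemma \ref{lme} asks only about cycles inside $G[C \cup T]$, so the three extracted leaves may be selected freely. The one point requiring care is the reading of the hypothesis: $x_1, \ldots, x_t$ should be understood as $t$ distinguished leaves of $T$ (which may possess further leaves), so that passing to a sub-family of three — or to $t - 1$ in the inductive version — still meets the hypotheses of Lemma \ref{lmd} (respectively Lemma \ref{lme} at the smaller index).
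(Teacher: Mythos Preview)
Your proposal is correct, and your inductive alternative is verbatim the paper's proof: base case $t=3$ is Lemma~\ref{lmd}, and for larger $t$ either some $x_{i_0}$ has $e(x_{i_0},C)\le 2$ and one inducts on the remaining $t-1$ leaves, or every $e(x_i,C)\ge 3$ and Lemma~\ref{lmd} applies to any three of them. Your direct ``three heaviest'' argument is simply this induction unrolled into a single pigeonhole step, and your remark that $x_1,\ldots,x_t$ must be read as \emph{distinguished} (not necessarily all) leaves of $T$ is exactly what the paper implicitly uses when it applies induction to $X\setminus\{x_{i_0}\}$ and Lemma~\ref{lmd} to three vertices of $X$.
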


\begin{proof} We prove by induction on $t$.  By Lemma \ref{lmd}, the case $t = 3$ holds. Suppose the case $t - 1$ holds. Let $X = \{x_1, x_2, \ldots , x_t\}$. If $e(x_{i_0}, C) \leq 2$ for some $1 \leq i_0 \leq t$, then $e(X - \{x_{i_0}\}, C) \geq 2t - 1$, and we apply induction on $X - \{x_{i_0}\}$. Otherwise, $e(x_i, C) \geq 3$ for each $1 \leq i \leq t$, and we apply Lemma \ref{lmd} to any three vertices in $X$.
\end{proof}

\begin{lemma}\label{lmf} Let $G$ be a graph satisfying the assumption of Theorem \ref{M and Y}, and let $C_1, \ldots , C_{k - 1}$ be $k - 1$ disjoint cycles of $G$ such that $\{C_1, \ldots , C_{k - 1}\}$ is minimal. Suppose that $H = G - \cup^{k - 1}_{i = 1}C_i$ is a forest which has $t$ leaves. Then there exist $k$ disjoint cycles in $G$ or there exists a triangle $C$ in $G[H \cup C_i]$ such that $\omega(G[H \cup C_i] - C) < \omega(H)$, for some $1 \leq i \leq k - 1$.
\end{lemma}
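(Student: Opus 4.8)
The plan is to argue by contradiction: suppose that $G$ contains no $k$ disjoint cycles and that for every $1 \leq i \leq k-1$ there is no triangle $C$ in $G[H \cup C_i]$ with $\omega(G[H \cup C_i] - C) < \omega(H)$, and derive a contradiction. Let $X = \{x_1, \ldots, x_t\}$ be the $t$ leaves of $H$. Since each $x_j$ is a leaf of the forest $H$, we have $d_H(x_j) \leq 1$, and since $H = G - \cup_{i=1}^{k-1} C_i$, the identity $d_G(x_j) = d_H(x_j) + \sum_{i=1}^{k-1} e(x_j, C_i)$ holds. The key quantitative step is to feed the degree-sum hypothesis of Theorem \ref{M and Y} into this count: assuming $X$ is independent, $\sum_{j=1}^{t} d_G(x_j) \geq \sigma_t(G) \geq 2kt - t + 1$, so that
\[
\sum_{i=1}^{k-1} e(X, C_i) = \sum_{j=1}^{t}\bigl(d_G(x_j) - d_H(x_j)\bigr) \geq (2kt - t + 1) - t = 2t(k-1) + 1 .
\]
By the pigeonhole principle over the $k-1$ cycles, some $C_i$ satisfies $e(X, C_i) \geq 2t + 1$.

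Next I would pin down the shape of this $C_i$. Because $|X| = t$ and $e(X, C_i) \geq 2t + 1 > 2t$, at least one leaf $x_j$ has $e(x_j, C_i) \geq 3$. By the minimality of $\{C_1, \ldots, C_{k-1}\}$, Lemma \ref{lma} then forces $e(x_j, C_i) = 3$ and $|C_i| = 3$, so $C_i$ is a triangle. The proof now splits according to $\omega(H)$. If $\omega(H) \geq 2$, then since every component of a forest contains a leaf, the set $X$ meets at least two components of $H$, so Lemma \ref{lmc} applies with the forest $F = H$, the triangle $C = C_i$, and $e(X, C_i) \geq 2t + 1$: it yields either two disjoint cycles in $G[H \cup C_i]$ — which together with the remaining $k-2$ cycles give $k$ disjoint cycles in $G$ — or a triangle $C'$ in $G[H \cup C_i]$ with $\omega(G[H \cup C_i] - C') < \omega(H)$; both outcomes contradict our assumption. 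If $H$ is a single tree, I instead apply Lemma \ref{lme} with $T = H$ and $C = C_i$: it produces either two disjoint cycles in $G[H \cup C_i]$ (again giving $k$ disjoint cycles with the other $k-2$ cycles) or a cycle $C'$ with $|C'| < |C_i| = 3$, which is impossible; hence we again obtain $k$ disjoint cycles, a contradiction. In every case the assumption fails, proving the lemma.

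The step I expect to be the main obstacle is the independence of the $t$ leaves, which is exactly what licenses the use of $\sigma_t(G)$. Two leaves of a forest can be adjacent only when they constitute a $K_2$ component, so $X$ fails to be independent precisely when $H$ has one or more $K_2$ components. To handle this I would either argue that such components cannot arise under the minimality of the cycles together with the order bound $|G| \geq (2t-1)k$, or else modify the counting by selecting a single endpoint from each $K_2$ component, forming a genuine independent set and compensating in the degree-sum estimate so that the pigeonhole conclusion $e(X, C_i) \geq 2t + 1$ still survives. Once that bound is secured, the structural portion of the argument — forcing $C_i$ to be a triangle via Lemma \ref{lma} and then invoking Lemma \ref{lmc} or Lemma \ref{lme} — goes through verbatim, so the bookkeeping around small components of $H$ is the only delicate point.
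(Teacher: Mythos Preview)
Your argument is correct and mirrors the paper's proof almost exactly: bound $d_H(X)\le t$ for the leaf set $X$, apply $\sigma_t(G)\ge 2kt-t+1$ to get $e(X,\mathbf{C})\ge 2t(k-1)+1$, pigeonhole onto some $C_{i_0}$ with $e(X,C_{i_0})\ge 2t+1$, then invoke Lemma~\ref{lme} when $H$ is a single tree and Lemma~\ref{lmc} (after Lemma~\ref{lma} forces $|C_{i_0}|=3$) when the leaves span at least two components. The independence-of-leaves issue you flag (namely possible $K_2$ components of $H$) is real but is also tacitly passed over in the paper's proof, so your caution there goes beyond what the paper itself supplies.
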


\begin{proof} Let $X = \{x_1, x_2, \ldots , x_t\}$ be $t$ leaves of $H$ and $\mathbf{C} = \{C_1, \ldots , C_{k - 1}\}$. Clearly, $d_H(X) \leq t$. Hence, $e(X, \mathbf{C}) \geq 2kt - t + 1 - t = 2t(k - 1) + 1$. Therefore, $e(X, C_{i_0}) \geq 2t + 1$ for some $1 \leq i_0 \leq k - 1$. If the leaves in $X$ come from the same component of $H$, then using Lemma \ref{lme}, there exist $k$ disjoint cycles in $G$. So the leaves in $X$ must come from at least two components of $H$. Since $e(X, C_{i_0}) \geq 2t + 1$,  we have $e(x_i, C_{i_0}) \geq 3$ for some $1 \leq i \leq t$. By Lemma \ref{lma}, $C_{i_0}$ is a triangle. Thus using lemma \ref{lmc}, there exist $k$ disjoint cycles in $G$ or there exists a triangle  $C$ in $G[H \cup C_{i_0}]$ such that $\omega(G[H \cup C_{i_0}] - C) < \omega(H)$.
\end{proof}

Let $F$ be a forest. We call a vertex $x \in V(F)$ \emph{large degree vertex} if $d_F(x) \geq 3$.

\begin{lemma}\label{lemk}(i) Let $T$ be a tree and $L = \{x_1, \ldots , x_m\}$ a set of large degree vertices of $T$ with $d_T (x_i) = d_i$ for any $1 \leq i \leq m$. Then $T$ contains at least $\sum^m\limits_{i = 1}d_i - 2(m - 1)$ leaves.

(ii) Let $T$ be a tree and $S \subseteq V(T)$ a vertex set. If $S$ contains all the leaves of $T$, then $d_T(S) \leq 2|S| - 2$.

(iii) Let $F$ be a forest and $S \subseteq V(F)$ a vertex set. If $S$ contains all the leaves of $F$, then $d_F(S) \leq 2|S| - 2\omega(F)$.
\end{lemma}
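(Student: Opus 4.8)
The three parts are elementary counting statements, so my plan is to prove (i) and (ii) directly from the degree-sum identity in a tree, and then deduce (iii) from (ii) by summing over the components of $F$. For part (i), I would invoke the standard degree count in a tree. Writing $n = |V(T)|$ and letting $n_i$ denote the number of vertices of $T$ of degree $i$, the identities $\sum_i n_i = n$ and $\sum_i i\,n_i = 2(n-1)$ combine to give $n_1 = 2 + \sum_{i \ge 3}(i-2)n_i$; that is, the number of leaves of $T$ equals $2 + \sum_{v:\,d_T(v)\ge 3}(d_T(v)-2)$. Since $L$ is a set of large degree vertices, it is contained in the set of all vertices of degree at least $3$, so restricting the sum to $L$ only decreases it: the number of leaves is at least $2 + \sum_{i=1}^{m}(d_i - 2) = \sum_{i=1}^{m} d_i - 2(m-1)$. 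Alternatively one can argue by induction on $m$, the base case $m=1$ being the observation that a vertex of degree $d_1$ forces $d_1$ branches each terminating in a leaf, but the counting argument is cleaner.

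For part (ii), I would count degrees from the complement. Because $S$ contains all the leaves of $T$, every vertex of $V(T) \setminus S$ is an internal vertex and hence satisfies $d_T \ge 2$, so $d_T(V(T)\setminus S) \ge 2(|V(T)| - |S|)$. Subtracting this from the total $d_T(V(T)) = 2(|V(T)| - 1)$ yields $d_T(S) = 2(|V(T)| - 1) - d_T(V(T)\setminus S) \le 2|S| - 2$, as required.

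For part (iii), I would apply (ii) componentwise. Let $T_1, \ldots, T_{\omega(F)}$ be the components of $F$ and put $S_j = S \cap V(T_j)$. Since $S$ contains every leaf of $F$, the set $S_j$ contains every leaf of the tree $T_j$, so (ii) gives $d_{T_j}(S_j) \le 2|S_j| - 2$ for each $j$. As $F$ is the disjoint union of the $T_j$, we have $d_F(S) = \sum_j d_{T_j}(S_j)$, and summing the bounds produces $d_F(S) \le 2\sum_j |S_j| - 2\omega(F) = 2|S| - 2\omega(F)$.

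None of these steps presents a genuine obstacle; the only places demanding care are the observation in (i) that $L$ need only be a subset of the high-degree vertices, so that one obtains an inequality rather than equality, and the degenerate components in (iii). For a one-vertex component the single vertex is a leaf under the stated definition, and the bound in (ii) reads $0 \le 2\cdot 1 - 2 = 0$; for a two-vertex component it reads $2 \le 2\cdot 2 - 2 = 2$. Thus (ii) applies with equality in both cases, and the componentwise summation in (iii) goes through without exception.
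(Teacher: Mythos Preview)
Your arguments are correct. Parts (i) and (iii) are essentially what the paper does: for (i) the paper writes the single inequality $\sum_{i=1}^m d_i + l + 2(|T|-m-l)\le 2(|T|-1)$ and solves for $l$, which is the same degree-sum count you carry out via $n_1 = 2 + \sum_{i\ge 3}(i-2)n_i$; for (iii) the paper likewise applies (ii) componentwise and sums. The genuine difference is in (ii). The paper deduces (ii) from (i): it lets $x_1,\dots,x_r$ be the large-degree vertices lying in $S$, invokes (i) to get at least $\sum d_i - 2(r-1)$ leaves (all of which lie in $S$), then splits $d_T(S)$ into the contributions of large-degree vertices, leaves, and degree-$2$ vertices in $S$ and simplifies to $2|S|-2$. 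Your complement argument---$d_T(V(T)\setminus S)\ge 2(|T|-|S|)$ because every vertex outside $S$ has degree at least $2$---is shorter, does not rely on (i) at all, and handles the degenerate one- and two-vertex trees you checked without extra work. Either route is fine; yours makes (ii) logically independent of (i), while the paper's route exhibits (i), (ii), (iii) as a chain.
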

\begin{proof}(i) Let $l$ be the number of leaves in $T$. Since $T$ is a tree, it has $|T| - 1$ edges. Clearly, $\sum^m\limits_{i = 1}d_i + l + 2(|T| - m - l) \leq 2(|T| - 1)$. Then $l \geq \sum^m\limits_{i = 1}d_i - 2(m - 1)$, done.

(ii) Suppose that $\{x_1, \ldots , x_r\}$ are all the large degree vertices that is contained in $S$ with $d_T(x_i) = d_i$ for each $1 \leq i \leq r$. By lemma \ref{lemk}-(ii), T contains at least $\sum^r\limits_{i = 1}d_i - 2(r - 1)$ leaves. Thus $S$ contains at least $\sum^r\limits_{i = 1}d_i - 2(r - 1)$ leaves. Therefore, $d_T(S) \leq \sum^k\limits_{i = 1}d_i + (\sum^r\limits_{i = 1}d_i - 2(r - 1)) + 2(|S| - r - (\sum^r\limits_{i = 1}d_i - 2(r - 1))) = 2|S| - 2$.

(iii) Let $\omega(F) = \omega$. Suppose $F = T_1 \cup \cdots \cup T_{\omega}$. Let $S_i = S \cap V(T_i)$ for any $1 \leq i \leq \omega$. Then $S = S_1 \cup \cdots \cup S_{\omega}$. By Lemma \ref{lemk}-(i), $d_F(S_i) = d_{T_i}(S_i) \leq 2|S_i| - 2$ for any $1 \leq i \leq \omega$. Hence, $d_F(S) = \sum^{\omega}\limits_{i = 1}d_F(S_i) \leq \sum^{\omega}\limits_{i = 1}(2|S_i| - 2) = 2|S| - 2\omega$.
\end{proof}

\begin{lemma}\label{lmg}(\cite{conj}) Let $C_1$ and $C_2$ be two disjoint cycles such that $|C_2| \geq 6$. Suppose that $C_2$ contains vertices with the following degree sequences from $C_2$ to $C_1$. Then $G[C_1 \cup C_2]$ contains two disjoint cycles $C'_1$ and $C'_2$ such that $|C'_1| + |C'_2| < |C_1| + |C_2|$.

$(i) \ (5, 3)$

$(ii) \ (3, 3, 1)$

$(iii) \ (3, 2, 1, 1)$

$(iv)\ (3, 1, 1, 1, 1, 1)$

$(v)\ (2, 2, 2, 2, 2)$
\end{lemma}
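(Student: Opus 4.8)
The plan is to reformulate the conclusion and then run a short case analysis driven by two elementary ways of building a reducing pair of cycles. First observe that, since $C_1$ and $C_2$ are disjoint, any two disjoint cycles $C'_1, C'_2 \subseteq G[C_1 \cup C_2]$ satisfy $|C'_1| + |C'_2| = |V(C'_1) \cup V(C'_2)| \le |C_1| + |C_2|$, with equality exactly when they cover every vertex of $C_1 \cup C_2$. Hence it suffices to exhibit two disjoint cycles in $G[C_1 \cup C_2]$ that omit at least one vertex. Write $p = |C_1|$ and $q = |C_2| \ge 6$, and note that the only edges of $G[C_1 \cup C_2]$ not lying on $C_1$ or $C_2$ are the ``crossing'' edges counted by the given degree sequence.

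Two economical gadgets do almost all the work. A \emph{$C_1$-pendant} is a cycle $a\,u_i\,P\,u_j\,a$, where $a \in V(C_2)$ has two neighbours $u_i, u_j$ on $C_1$ and $P$ is one of the two arcs of $C_1$ between them; it uses a single vertex of $C_2$. A \emph{$C_2$-pendant} is the dual object, a cycle $u\,a\,Q\,b\,u$ where $u \in V(C_1)$ has two neighbours $a, b$ on $C_2$ and $Q$ is an arc of $C_2$; it uses a single vertex of $C_1$. Two vertex-disjoint pendants yield the required pair: two disjoint $C_1$-pendants use only two vertices of $C_2$, so since $q \ge 6$ they automatically omit at least four vertices of $C_2$, while two disjoint $C_2$-pendants use only two vertices of $C_1$ and omit $p-2 \ge 1$ of them. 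Thus in each case the task reduces to choosing arcs so that the two pendants are disjoint.

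For the sequences $(5,3)$, $(3,3,1)$, $(3,2,1,1)$ and $(2,2,2,2,2)$ there are at least two vertices of $C_2$ with at least two neighbours each on $C_1$, so the natural attempt is two disjoint $C_1$-pendants. The one genuinely delicate point is that the chosen vertices may have \emph{clustered} neighbourhoods on $C_1$ --- in the extreme, several of them may be adjacent to the same pair $\{u_i, u_j\}$, in which case any two $C_1$-pendants share $u_i$ or $u_j$ and fail to be disjoint. This is exactly where the extra entries in the sequences are spent: when the neighbourhoods are spread out one selects two disjoint arcs directly (the large values $5$ and $3$ leave ample room), and in the clustered case one switches to $C_2$-pendants, using two distinct common neighbours on $C_1$ as the connecting vertices and pairing up the many $C_2$-vertices into two disjoint arcs of $C_2$. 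I would organise each of these four sequences as ``spread versus clustered'' and, in each branch, simply write down the two disjoint pendants.

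The main obstacle is the sequence $(3,1,1,1,1,1)$, since here only one vertex $a$ of $C_2$ has two neighbours on $C_1$, so at most one $C_1$-pendant is available and the two-pendant strategy breaks down. I would take $C'_1$ to be the $C_1$-pendant on $a$, using the short arc between two cyclically consecutive neighbours of $a$, and build $C'_2$ from the five degree-one vertices $c_1, \dots, c_5$. If two of them, say $c_i, c_j$, hit the same vertex of $C_1$, a $C_2$-pendant is available; otherwise I route a \emph{bridge} cycle $c_i\,P\,c_j\,u_s\,S\,u_r\,c_i$, where $c_i \sim u_r$ and $c_j \sim u_s$ with $r \ne s$, $P$ is the arc of $C_2$ between $c_i, c_j$ avoiding $a$, and $S$ is the arc of $C_1$ between $u_r, u_s$ avoiding the short arc used by $C'_1$. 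Having five choices among the $q \ge 6$ vertices of $C_2$ gives enough freedom to pick $c_i, c_j$ so that $P$ is a proper arc, which guarantees both that some vertex of $C_2$ is omitted and that $C'_1, C'_2$ are disjoint. Verifying this routing in the residual overlap sub-cases, together with the clustered branches above, is the bulk of the work, but each is a finite check of arc positions rather than a new idea; the hypothesis $q \ge 6$ is used precisely to secure the spare vertex that forces the total length to decrease strictly.
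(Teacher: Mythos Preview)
The paper does not prove this lemma at all; it is quoted from Gould, Hirohata and Keller \cite{conj} and used as a black box. So there is no in-paper proof to compare against, and your proposal has to stand on its own.

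Your building blocks are the right ones, and the reformulation (``find two disjoint cycles that omit at least one vertex of $C_1\cup C_2$'') is clean. The gap is in the organising dichotomy. You claim that for sequences (i), (ii), (iii), (v) one can always take two disjoint $C_1$-pendants in the spread case and \emph{two disjoint $C_2$-pendants} in the clustered case. For (ii) this second option is impossible in principle: with only three marked vertices $a,b,c$ on $C_2$, any two closed arcs of $C_2$ whose endpoints lie in $\{a,b,c\}$ must share an endpoint (four endpoints chosen from three vertices), so no pair of $C_2$-pendants can be vertex-disjoint. Concretely, take $|C_1|=3$ with $a$ and $b$ each adjacent to all of $u_1,u_2,u_3$ and $c$ adjacent to $u_1$: every $C_1$-pendant is a triangle using two of $u_1,u_2,u_3$, so no two are disjoint (your clustered case), and by the pigeonhole remark above no two $C_2$-pendants are disjoint either. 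What actually rescues (ii) here is a \emph{mixed} pair---for instance the $C_1$-pendant $a\,u_2\,u_3\,a$ together with the $C_2$-pendant $u_1\,b\,[\text{arc avoiding }a]\,c\,u_1$---and one then has to check, using $|C_2|\ge 6$, that among the symmetric choices at least one gives strictly smaller total length. A similar failure occurs in (iii) when the four marked vertices alternate with $a,b$ around $C_2$: the only $C_1$-vertex with two neighbours besides $a,b$ may be forced to use an arc containing $a$ or $b$, again killing the two-$C_2$-pendant plan. So your framework is sound, but the case analysis needs an explicit ``mixed pendant'' branch beyond the spread/clustered split; describing (ii)--(v) as ``simply write down the two disjoint pendants'' materially understates what has to be done.
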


\begin{lemma}\label{lemma} Let $C$ be a triangle, $P$ a path with two end-vertices $x, y$. Suppose $z$ is a vertex of $G - C - P$. If $e(x, C), e(y, C) \geq 2$ and $e(z, C) = 3$, then there exist two disjoint cycles in $G[C \cup P \cup\{z\}]$.
\end{lemma}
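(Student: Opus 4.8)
The plan is to exhibit the two disjoint cycles explicitly, exploiting the fact that $z$ sees all of $C$. Write $C = v_1v_2v_3v_1$. Because $e(z,C)=3$, the vertex $z$ is adjacent to each of $v_1,v_2,v_3$, so for \emph{any} edge $v_av_b$ of the triangle the three vertices $z,v_a,v_b$ form a triangle $zv_av_bz$. Hence it suffices to produce a single cycle that uses the path $P$ together with exactly one vertex of $C$: the other two vertices of $C$ will automatically close up with $z$ into a disjoint triangle. The whole lemma therefore reduces to routing $P$ back into $C$ through one well-chosen vertex.

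To carry this out, the first step is to locate a common neighbour of the two end-vertices $x$ and $y$ in $C$. Since $e(x,C)\geq 2$ and $e(y,C)\geq 2$ while $|V(C)|=3$, an inclusion--exclusion count gives
\[
|N_G(x)\cap V(C)|+|N_G(y)\cap V(C)|\geq 4>3=|V(C)|,
\]
so $N_G(x)\cap N_G(y)\cap V(C)\neq\varnothing$. Fix such a common neighbour and call it $v_i$, and let $\{v_j,v_k\}=V(C)\setminus\{v_i\}$.

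Now assemble the two cycles. Traversing $P$ from $x$ to $y$ and then adding the edges $yv_i$ and $v_ix$ yields a cycle $C_1' = xPyv_ix$, which lives in $G[C\cup P]$ and uses only the vertex $v_i$ of $C$. Since $v_jv_k$ is an edge of the triangle $C$ and $z$ is adjacent to both $v_j$ and $v_k$, the set $\{z,v_j,v_k\}$ spans a triangle $C_2' = zv_jv_kz$. By construction $C_1'$ and $C_2'$ are vertex-disjoint (the first lies in $P\cup\{v_i\}$, the second in $\{z,v_j,v_k\}$), and both lie in $G[C\cup P\cup\{z\}]$, which is exactly what is required.

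I expect no genuine obstacle here; the only point needing care is that $C_1'$ really is a cycle, i.e.\ that its length is at least $3$. This is guaranteed because $P$ is a path with two \emph{distinct} end-vertices $x\neq y$, so $xPy$ already contributes at least one edge and $xPyv_ix$ has length at least $3$. (Note that distinctness is also forced a posteriori: two disjoint cycles need at least six vertices, so if $P$ degenerated to a single vertex the ambient graph $G[C\cup P\cup\{z\}]$ would have only five vertices and the conclusion would be impossible.)
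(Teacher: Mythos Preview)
Your proof is correct and follows essentially the same argument as the paper: pick a common neighbour $v_i$ of $x$ and $y$ on $C$ (forced by $2+2>3$), close up $P$ through $v_i$, and close up $z$ with the remaining two triangle vertices. The only difference is that you spell out the pigeonhole count and the $|P|\geq 2$ check explicitly, which the paper leaves implicit.
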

\begin{proof} Suppose $C = v_0 v_1 v_2 v_0$. Since $e(x, C), e(y, C) \geq 2$, $x, y$ have a common neighbor on $C$, say $v_0$. Then $x \cdots  y v_0 x$ forms a cycle and $z v_1 v_2 z$ is another cycle.
\end{proof}

\begin{lemma}\label{lemmb} Let $C$ be an induced cycle with $|C| \leq 4$, $P_1 = x \cdots y$ a path with end-vertices $x, y$ and $P_2 = z \cdots w$ a path with end-vertices $z, w$. If $P_1$ and $P_2$ are disjoint , $e(x, C), e(y, C) \geq 2$ and $e(z, C) \geq 2$, $e(w, C) \geq 1$, then there exist two disjoint cycles in $G[C \cup P_1 \cup P_2]$ or a shorter cycle $C'$ than $C$ in $G[C \cup P_1 \cup P_2]$.
\end{lemma}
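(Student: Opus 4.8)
The plan is to argue separately according to whether $|C| = 3$ or $|C| = 4$, building the two required cycles out of two elementary gadgets. \emph{Gadget $1$:} given a path $P$ with end-vertices $p, q$, if $p$ and $q$ have a common neighbor $v$ on $C$ then $pPqvp$ is a cycle meeting $C$ only in $v$; if instead $p \sim v_a$ and $q \sim v_b$ for two vertices $v_a, v_b$ that are adjacent on $C$, then $pPqv_bv_ap$ is a cycle meeting $C$ in $\{v_a, v_b\}$. \emph{Gadget $2$:} for $P_2$ I record the dichotomy that either $z$ and $w$ share a neighbor on $C$ (so $zP_2wvz$ is a cycle meeting $C$ in one vertex), or they do not, in which case, since $e(z, C) \ge 2$ and on a triangle $z$ misses at most one vertex, $z$ is forced to be adjacent to two prescribed vertices. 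Since a cycle has length at least $3$, when $|C| = 3$ no shorter cycle exists, so there two disjoint cycles must be produced outright.

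First I would dispose of $|C| = 4$. Write $C = v_0v_1v_2v_3v_0$; since $C$ is induced it has no chord, so any vertex of $P_1 \cup P_2$ adjacent to two \emph{consecutive} vertices of $C$ yields a triangle, i.e.\ a cycle shorter than $C$, and we are done. Hence I may assume each of $x, y, z$ (the ends of degree at least $2$) is adjacent to exactly one of the diagonals $\{v_0, v_2\}$, $\{v_1, v_3\}$. I would then split into the cases ``$x, y$ lie on the same diagonal'' and ``$x, y$ lie on different diagonals'', using Gadget $1$ to route the $P_1$-cycle $Q_1$ through a single diagonal vertex in the first case and through any chosen edge of $C$ in the second (in a $4$-cycle each vertex of one diagonal is adjacent to each vertex of the other, so $Q_1$ can be made to avoid any prescribed edge). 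In each case the complement of $Q_1$ in $C$ leaves one or two vertices for the $P_2$-cycle $Q_2$, and a short check on the diagonal of $z$ and on the guaranteed neighbor of $w$ — appealing to Gadget $2$ when $z$ and $w$ share a neighbor — produces $Q_2$ disjoint from $Q_1$.

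Next I would treat $|C| = 3$, say $C = v_0v_1v_2v_0$, where two disjoint cycles must be found. If $e(z, C) = 3$ then Lemma \ref{lemma}, applied to $P_1$ and the single vertex $z$, already gives two disjoint cycles inside $G[C \cup P_1 \cup \{z\}]$. So assume $z$ is adjacent to exactly two vertices of $C$. The clean dichotomy is whether $z$ and $w$ have a common neighbor $v^*$. If they do, set $D_2 = zP_2wv^*z$; since each of $x, y$ misses at most one vertex of the triangle, each is adjacent to at least one of the two remaining vertices, and Gadget $1$ builds a $P_1$-cycle $D_1$ inside those two vertices, disjoint from $D_2$.

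The hard part will be the remaining triangle configuration: $z$ and $w$ share no neighbor. Then $w$'s unique neighbor $v_w$ is exactly the one vertex $z$ misses, so $z$ is adjacent to the other two vertices $v', v''$, and both the ends of $P_1$ and the vertex $z$ tend to compete for the same pair $\{v', v''\}$. The resolution I expect to need is a \emph{mixed} cycle that threads $P_2$ through a triangle edge, for instance $zP_2wv_wv'z$ (valid since $w \sim v_w$, $v_w \sim v'$ on $C$, and $z \sim v'$), which occupies $\{v_w, v'\}$ and frees $v''$ for the $P_1$-cycle $xP_1yv''x$; symmetric choices handle the cases distinguished by which vertices $y$ sees. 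Verifying that one such mixed routing is always available — i.e.\ that the competition for $\{v', v''\}$ can always be broken — is the crux, and I would organize it as a brief exhaustive check over the neighborhoods of $y$ and $w$ on the three vertices of $C$.
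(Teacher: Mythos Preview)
Your proposal is correct and follows essentially the same route as the paper: the same case split on $|C|$, the same ``two consecutive neighbors give a triangle'' reduction when $|C|=4$, and the same core dichotomy on whether $z$ and $w$ share a neighbor on $C$, with the $P_1$-cycle then routed through whatever is left. The only differences are cosmetic --- the paper treats $|C|=3$ first and in both length cases builds the $P_2$-cycle first (your $|C|=4$ argument branches instead on the diagonals of $x,y$, which works but costs a couple of extra subcases), and your detour through Lemma~\ref{lemma} for $e(z,C)=3$ is unnecessary since that situation is already absorbed by the shared-neighbor branch.
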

\begin{proof} We discuss in two cases according to the length of $C$.

First suppose that $|C| = 3$.
Let $C = v_0v_1v_2v_0$. If $z$ and $w$ share a common neighbor on $C$ say $v_0$, then $z\cdots wv_0z$ is a cycle. Since $e(x, C), e(y, C) \geq 2$, both $x$ and $y$ have at least one neighbor on  $C - \{v_0\}$. So it is easy to find another cycle. Thus $z$ and $w$ have different neighbors on $C$. Suppose $wv_0, zv_1, zv_2 \in E(G)$. Since $e(x, C), e(y, C) \geq 2$, they share a common neighbor $v_i$ on $C$. If $i = 1, 2$, then $x \cdots yv_ix$ and $z \ldots wv_0v_{3 - i}z$ are two disjoint cycles. If $i = 0$, then $x \cdots yv_0x$ and $zv_1v_2z$ are two disjoint cycles.

Second we suppose that $|C| = 4$.
Let $C = v_0v_1v_2v_3v_0$. It is obvious that $e(x, C) = 2$ and its two neighbors on $C$ are nonadjacent. Otherwise, it is easy to find a triangle. The same is true for $y$ and $z$.

If $z, w$ have a common neighbor, say $v_i$, on $C$. Then $z \cdots wv_iz$ is a cycle. Since $e(x, C)= e(y, C) = 2$, we know $e(x, C - v_i) = e(y, C - v_i) = 1$. It is not difficult to find another cycle in $G[P_1 \cup (C - v_i)]$. So $z, w$ have different neighbors on $C$. Without loss of generality, we assume $zv_1, zv_3, wv_0 \in E(G)$. Then $z \cdots wv_0v_1$ is a cycle. Since $v_0, v_1$ are adjacent on $C$, both $x, y$ have at most one neighbor in $v_0, v_1$. So $e(x, C - \{v_0, v_1\}) = e(y, C - \{v_0, v_1\}) = 1$. Again we can find another cycle in $G[P_1 \cup (C - \{v_0, v_1\})]$.
\end{proof}

\begin{lemma}\label{lemmc} Let $C$ be an induced cycle with $|C| \leq 4$, $P_1 = x \cdots y$ a path with end-vertices $x, y$ and $P_2$  connected. Suppose $P_1$, $P_2$ are disjoint and  $e(x, C), e(y, C) \geq 2$.  If there exist three vertices $u, v, w \in V(P_2)$  such that $e(u, C)$, $e(v, C), e(w, C) \geq 1$, then there exist two disjoint cycles in $G[C \cup P_1 \cup P_2]$ or a shorter cycle in $G[C \cup P_1 \cup P_2]$.
\end{lemma}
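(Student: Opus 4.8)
The plan is to reduce to Lemma \ref{lemmb} whenever $P_2$ has a vertex with two neighbors on $C$, and otherwise to run a short case analysis governed by $|C|\in\{3,4\}$ and by how the three attaching vertices of $P_2$ meet $C$. First I would dispose of the case that some vertex $s\in V(P_2)$ satisfies $e(s,C)\ge 2$. Since $u,v,w$ are three distinct vertices of $P_2$ each with a neighbor on $C$, at least one of them, say $w'$, differs from $s$. As $P_2$ is connected it contains a path $P_2'$ from $s$ to $w'$, and $P_2'$ is a path with end-vertices $s$ (with $e(s,C)\ge 2$) and $w'$ (with $e(w',C)\ge 1$). Applying Lemma \ref{lemmb} to $C$, $P_1$ and $P_2'$ produces either two disjoint cycles or a cycle shorter than $C$, all lying inside $G[C\cup P_1\cup P_2]$, which is what we want.

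Hence I may assume every vertex of $P_2$ has at most one neighbor on $C$, so in particular $e(u,C)=e(v,C)=e(w,C)=1$. The strategy is now to build one cycle $D_1$ through $P_1$ and one cycle $D_2$ through $P_2$ using \emph{disjoint} subsets of $V(C)$: a cycle through $P_1$ (or through a $P_2$-path) can be closed either through a single common $C$-neighbor of its two relevant end-vertices, or through two $C$-vertices joined along $C$. Because $C$ is induced I would record one reduction valid for $|C|=4$: if $x$ or $y$ has two adjacent neighbors on $C$, then $G[C\cup P_1]$ contains a triangle, a cycle shorter than $C$, and we are done; so when $|C|=4$ I may assume $N_C(x)$ and $N_C(y)$ are each opposite (non-adjacent) pairs. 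No such reduction is needed for $|C|=3$.

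For $|C|=3$, the three single neighbors of $u,v,w$ either repeat or exhaust $V(C)$. If two of them coincide at a vertex $v_0$, I close $D_2$ through a $u$--$v$ path in $P_2$ together with $v_0$, consuming only $v_0$; then $x$ and $y$ each retain a neighbor among the remaining two vertices of $C$, which I join (via a common neighbor or along the edge of $C$) to close a disjoint $D_1$. If the three neighbors are all distinct, I instead close $D_1$ through the guaranteed common $C$-neighbor of $x,y$ (one vertex of $C$) and close $D_2$ through the two remaining, adjacent, vertices of $C$ and the two $P_2$-vertices attached to them. For $|C|=4=v_0v_1v_2v_3v_0$ I use the same dichotomy: if two of $u,v,w$ share a neighbor $v_i$, then $D_2$ uses only $v_i$, and since $C-v_i$ is a $3$-vertex path on which each of $x,y$ still has a neighbor, I close $D_1$ along that path; if $u,v,w$ have three distinct neighbors, then among any three vertices of the $4$-cycle two are adjacent, so $D_2$ closes through an adjacent pair $v_j,v_{j+1}$, leaving the adjacent pair $v_{j+2},v_{j+3}$, and since each of $N_C(x),N_C(y)$ is an opposite pair it meets $\{v_{j+2},v_{j+3}\}$ in exactly one vertex, allowing me to close $D_1$ there.

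The main obstacle I anticipate is the bookkeeping in the two all-distinct subcases: I must guarantee that the vertices of $C$ spent on $D_2$ are precisely those not required to close $D_1$. This is why I always route $P_1$ through a single common neighbor when $P_2$ is forced to use two $C$-vertices, and why I exploit that $C$ is induced—so that ``two adjacent neighbors'' already yields a shorter cycle—to pin $N_C(x),N_C(y)$ down as opposite pairs in the $4$-cycle case. The reduction step itself is routine once Lemma \ref{lemmb} is in hand; the case analysis is elementary but must be organized so that $D_1$ and $D_2$ are verified disjoint in every branch.
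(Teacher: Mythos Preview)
Your proposal is correct and follows essentially the same route as the paper: a case split on $|C|\in\{3,4\}$, the observation that for $|C|=4$ the neighborhoods $N_C(x),N_C(y)$ must be opposite pairs (else a triangle), and then the dichotomy on whether two of $u,v,w$ share a $C$-neighbor or all three are distinct, closing $D_1$ and $D_2$ on complementary pieces of $C$. The only differences are cosmetic: your preliminary reduction via Lemma~\ref{lemmb} to the situation $e(u,C)=e(v,C)=e(w,C)=1$ is valid but unnecessary (the paper's case analysis already works with $\ge 1$), and in the $|C|=4$ all-distinct branch you pick an adjacent pair of $C$-neighbors for $D_2$ first and then locate $D_1$ on the remaining edge, whereas the paper splits instead on whether $N_C(x)=N_C(y)$---both orderings produce the same two cycles.
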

\begin{proof} We discuss in two cases according to the length of $C$.

If $|C| = 3$,
then let $C = v_0v_1v_2v_0$. Consider the three vertices $u, v, w$. If any two of them say $u, v$, share a common neighbor, say $v_0$, on $C$. Then $v_0u \cdots vv_0$ is a cycle. Since $e(x, C), e(y, C) \geq 2$ and $e(x, C - v_0), e(y, C - v_0) \geq 1$, it is easy to find another cycle in $G[P_1 \cup (C - v_0)]$. So $u, v, w$ have different neighbors on $C$. Without loss of generality, we assume $uv_0, vv_1, wv_2 \in E(G)$. Since $e(x, C), e(y, C) \geq 2$, they share at least one common neighbor say $v_0$, on $C$. Then $v_0x\cdots yv_0$ and $v\cdots wv_2v_1v$ are two disjoint cycles.

So $|C| = 4$.
Let $C = v_0v_1v_2v_3v_0$. Clearly, $e(x, C), e(y, C) = 2$ and both $x, y$ have nonadjacent vertices on $C$. Otherwise, we can find a triangle.

If any two of $u, v, w$ say $u, v$ share a common neighbor say $v_i$, on $C$. Then $v_iu \cdots v v_i$ is a cycle. As $e(x, C), e(y, C) = 2$ and $e(x, C - v_i), e(y, C - v_i) = 1$, it is easy to find another cycle in $G[P_1 \cup (C - v_i)]$.

So $u, v, w$ have different neighbors on $C$. Without loss of generality, we assume $uv_0, vv_1, wv_2 \in E(G)$. Consider $x, y$. If $x, y$ have common neighbors, say $v_0, v_2$, on $C$, then $v_0x \cdots yv_0$ and $v_1v \cdots w v_2v_1$ are two disjoint cycles. So $x, y$ have different neighbors on $C$. Suppose $xv_0, xv_2, yv_1, yv_3 \in E(G)$. Then $x\cdots yv_3v_0x$ and $v_1v \cdots w v_2v_1$ are two disjoint cycles.
\end{proof}

\section{Proof of Theorem \ref{M and Y}}

Let $G$ be an edge-maximal counterexample which satisfies the condition of Theorem \ref{M and Y}. Since a complete graph of order at least $(2t - 1)k$ contains $k$ disjoint cycles, $G$ is not complete. Let $x$ and $y$ be two non-adjacent vertices of $G$. Then $G' = G + xy$ is not a counterexample by the maximality of $G$. Hence $G'$ contains $k$ disjoint cycles $C_1, \ldots , C_k$ and without loss of generality, we may assume that $xy \in E(C_k)$.  This means that $G$ contains $k - 1$ disjoint cycles $C_1, \ldots , C_{k - 1}$. Let $\mathbf{C} = \{C_1, \ldots , C_{k - 1}\}$ and $H = G - \mathbf{C}$. Choose $C_1, \ldots , C_{k - 1}$ such that
\begin{equation}\label{insert}
\mathbf{C} \mbox{ is minimal.}
\end{equation}

Subject to (\ref{insert}),

\begin{equation}\label{first}
\omega(H) \mbox{ is minimum.}
\end{equation}

Clearly, any cycle $C \in \mathbf{C}$ has no chord by the minimality and $H$ is a forest otherwise $G$ would contain $k$ disjoint cycles.

We distinguish two cases according to the value of $|H|$.
\\
\\
{\bf CASE 1} \ $|H| \geq 3t - 1$.
\\
\\
Suppose that $\omega(H) = \omega$ and $H = T_1 \cup \cdots \cup T_{\omega}$, where $T_i$ is a tree for each $1 \leq i \leq \omega$. Clearly, by Lemma \ref{lmf}, $1 \leq \omega \leq t - 1$. Since a tree is a bipartite graph, there exists a vertex partition $(V_{i1}, V_{i2})$ of $V(T_i)$ such that $V(T_i) = V_{i1} \cup V_{i2}$ and $V_{i1}, V_{i2}$ are two disjoint independent sets of $T_i$. Let $X = \cup^{\omega}_{i = 1}V_{i1}$ and $Y = \cup^{\omega}_{i = 1}V_{i2}$. Then $X$ and $Y$ are two disjoint independent sets of $H$ and $V(H) = X \cup Y$. Without loss of generality, we may assume $|X| \geq |Y|$.

\begin{claim}\label{claimb} There exist two disjoint independent sets in $H$ such that each of which contains $t$ vertices.
\end{claim}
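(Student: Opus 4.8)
The plan is to exploit directly the two independent sets $X$ and $Y$ that have already been built from the bipartitions of the trees $T_1, \ldots, T_{\omega}$, using the elementary fact that any subset of an independent set is again independent. Since $X$ and $Y$ are disjoint, $X \cup Y = V(H)$, $|X| \geq |Y|$, and $|H| \geq 3t - 1$ in Case 1, I would split the argument into two cases according to whether $|Y|$ reaches the threshold $t$.

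First, suppose $|Y| \geq t$. Then $|X| \geq |Y| \geq t$ by the assumption $|X| \geq |Y|$. In this case I would simply choose an arbitrary $t$-element subset $X' \subseteq X$ and an arbitrary $t$-element subset $Y' \subseteq Y$. Since $X$ and $Y$ are independent and mutually disjoint, $X'$ and $Y'$ are two disjoint independent sets of $H$, each of cardinality $t$, which is exactly what the claim asks for.

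Second, suppose $|Y| \leq t - 1$. Then $|X| = |H| - |Y| \geq (3t - 1) - (t - 1) = 2t$. Now $X$ alone is a single independent set of size at least $2t$, so I would partition $X$ into two disjoint subsets $X'$ and $X''$ with $|X'| = |X''| = t$; this is possible precisely because $|X| \geq 2t$. Both $X'$ and $X''$ are independent, being subsets of the independent set $X$, and they are disjoint, which completes this case.

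The argument is short, and the only real point to watch is that the hypothesis $|H| \geq 3t - 1$ of Case 1 is exactly what makes the harder sub-case go through: when $|Y| \leq t - 1$ it forces $|X| \geq 2t$, which is the minimum size needed to carve two disjoint size-$t$ independent sets out of the single set $X$. A weaker bound such as $|H| \geq 3t - 2$ would only yield $|X| \geq 2t - 1$ and would fail, so the value of the threshold in the case distinction is essential rather than incidental.
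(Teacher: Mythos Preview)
Your argument is correct and is essentially the same as the paper's: both use the bipartition $V(H)=X\cup Y$ with $|X|\ge|Y|$ and split into the two cases where either both parts have size at least $t$ or else $|X|\ge 2t$ (the paper phrases the dichotomy in terms of $|X|$ rather than $|Y|$, but the logic is identical). Your added remark about the threshold $3t-1$ being sharp is a nice extra observation.
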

\begin{proof} Since $|X| + |Y| = |H| \geq 3t - 1$ and $|X| \geq |Y|$,  we see that $|X| > t$. If $t < |X| < 2t$, then $|Y| \geq t$. There exists an independent set of size $t$ in both $X$ and $Y$.  If $|X| \geq 2t$, then we can find two disjoint independent sets of size $t$ in $X$.
\end{proof}

Let $X_1, X_2$ be those disjoint independent sets of Claim \ref{claimb}. Denote their union by $I$. Choose $I$ such that it contains as many as leaves of $H$. We claim that $I$ contains all the leaves of $H$.

\begin{claim}\label{call} $I$ contains all the leaves of $H$.
\end{claim}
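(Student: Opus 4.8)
The plan is to prove Claim \ref{call} by contradiction, exploiting the extremal choice of $I$ (recall $I$ was selected among all admissible pairs of Claim \ref{claimb} so as to contain as many leaves of $H$ as possible) through a single exchange step, where the only obstruction to that step is ruled out by Lemma \ref{lmf}. So suppose, for contradiction, that some leaf $\ell$ of $H$ satisfies $\ell \notin I$. Since $\ell$ is a leaf, it has at most one neighbour $p$ in the forest $H$. As $X_1$ and $X_2$ are disjoint, $p$ belongs to at most one of them; fix an index $j \in \{1,2\}$ with $p \notin X_j$ (if $\ell$ is an isolated vertex of $H$, either index works). Then $\ell$ has no neighbour inside $X_j$, so $X_j \cup \{\ell\}$ is again independent.

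First I would perform the exchange. If $X_j$ contains a vertex $v$ that is not a leaf of $H$, set $X_j' = (X_j \setminus \{v\}) \cup \{\ell\}$. This is an independent set of size $t$, and it is disjoint from the other class $X_{3-j}$, because $\ell \notin I \supseteq X_{3-j}$ and $X_j \setminus \{v\} \subseteq X_j$. Hence $(X_j', X_{3-j})$ is an admissible pair as in Claim \ref{claimb}, but its union $X_j' \cup X_{3-j}$ contains strictly more leaves of $H$ than $I$ does, since we have gained the leaf $\ell$ and discarded the non-leaf $v$. This contradicts the maximality in the choice of $I$.

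Therefore the exchange must be blocked, which can only happen if $X_j$ contains no non-leaf vertex; that is, $X_j$ is an independent set consisting of exactly $t$ leaves of $H$. Now I would apply Lemma \ref{lmf} to these $t$ leaves. By that lemma either $G$ contains $k$ disjoint cycles, which is impossible since $G$ is a counterexample, or there is a triangle $C$ in $G[H \cup C_{i_0}]$ with $\omega(G[H \cup C_{i_0}] - C) < \omega(H)$ for some $i_0$; here $C_{i_0}$ is itself a triangle by Lemma \ref{lma}, since one of the $t$ leaves sends at least three edges to $C_{i_0}$. Replacing $C_{i_0}$ by $C$ yields $k-1$ disjoint cycles using the same number of vertices (both are triangles), so the minimality (\ref{insert}) is preserved, while the resulting forest has strictly fewer components, contradicting (\ref{first}). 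Either way we reach a contradiction, so no leaf of $H$ lies outside $I$.

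The main obstacle is precisely the blocked-exchange case: the exchange itself is immediate as long as the class receiving $\ell$ has room in the form of a non-leaf vertex to evict, and the entire weight of the proof is the observation that the only alternative is for that class to be a set of $t$ independent leaves, a configuration that the degree-sum hypothesis, packaged in Lemma \ref{lmf}, forbids. I expect this to be the step requiring care, since one must check that replacing $C_{i_0}$ by the triangle $C$ respects both (\ref{insert}) and (\ref{first}). It is worth noting that the argument also shows a posteriori that $H$ admits no independent set of $t$ leaves, so the leaves of $H$ comfortably fit inside the $2t$ vertices of $I$, which is consistent with the claim.
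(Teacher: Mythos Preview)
Your proof is correct and follows the same exchange argument as the paper: swap the missing leaf into the appropriate class $X_j$ in place of a non-leaf, and invoke Lemma~\ref{lmf} to rule out the one obstruction (that $X_j$ consists entirely of leaves). Your single choice of $j$ with $p\notin X_j$ slightly streamlines the paper's case split on whether the neighbour of the missing leaf lies in $I$ and whether that neighbour is itself a leaf, but the two arguments are essentially identical.
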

\begin{proof} Suppose there exists a leaf $x$ such that $x \notin I$. If $x$ is an isolated vertex or its neighbor $z \notin I$, replace a vertex $y \in I$ where $d_H(y) \geq 2$ by $x$, then we get a $I'$ which contains more leaves than $I$. By Lemma \ref{lmf}, this kind of vertex $y$ does exist. Therefore, $z \in I$. If $z$ is a leaf and without loss of generality assume $z \in X_1$, then add $x$ to $X_2$ by replacing a vertex $y$ with $d_H(y) \geq 2$. If $z$ is not a leaf, replace $z$ by $x$. In either case, we get a $I'$ which contains more leaves than $I$.
\end{proof}

Using Lemma \ref{lemk}-(iii), $d_H(I) \leq 2|I| - 2\omega = 4t - 2\omega$. Thus $e(I, {\mathbf{C}}) \geq 2(2kt - t + 1) - (4t - 2\omega) = 4t(k - 1) + 2\omega + 2 - 2t$. By pigeon hole principle, there exists some $C \in \mathbf{C}$ such that
\begin{eqnarray}\label{eq1}
e(I, C) &\geq& 4t + \frac{2\omega + 2 - 2t}{k - 1} \nonumber \\
       &\geq& 2t + 2\omega + 2, \mbox{  since $k \geq 2$ and $\omega \leq t - 1$}.
\end{eqnarray}

Let $Y$ be the set of all vertices $y \in I$ such that $e(y, C) \geq 2$. By Lemma \ref{lma}, $e(y, C) \leq 3$ for any $y \in Y$. Thus
\begin{equation}\label{eq2}
e(I, C) \leq 3|Y| + (2t - |Y|) = 2|Y| + 2t.
\end{equation}

By (\ref{eq1}) and (\ref{eq2}), we get that $2|Y| + 2t \geq 2t + 2\omega + 2$, i.e.
\begin{equation}\label{eq3}
|Y| \geq \omega + 1.
\end{equation}

Therefore, there exists some $T_i \subseteq H$ such that $|Y \cap V(T_i)| \geq 2$. Since $Y \neq \emptyset$, using Lemma \ref{lma}, we see $|C| \leq 4$.

\begin{claim}\label{claimd} For any $T_j$ with $j \neq i$, $e(T_j, C) \leq 2$.
\end{claim}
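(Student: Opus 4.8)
The plan is to argue by contradiction: assume some $T_j$ with $j \neq i$ satisfies $e(T_j, C) \geq 3$, and show that this always produces either two disjoint cycles inside $G[C \cup H]$ — which, together with the remaining $k - 2$ members of $\mathbf{C}$, would give $k$ disjoint cycles in $G$ and contradict the choice of $G$ as a counterexample — or a cycle $C'$ shorter than $C$, which (being disjoint from the other members of $\mathbf{C}$) contradicts the minimality (\ref{insert}) of $\mathbf{C}$. Either outcome is the desired contradiction, so $e(T_j, C) \leq 2$.

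First I would fix the witnesses inside $T_i$. Since $|Y \cap V(T_i)| \geq 2$, pick two distinct vertices $y_1, y_2 \in Y \cap V(T_i)$; by the definition of $Y$ we have $e(y_1, C), e(y_2, C) \geq 2$. As $T_i$ is a tree, there is a unique path $P_1$ in $T_i$ joining $y_1$ and $y_2$, and $y_1, y_2$ are exactly its two end-vertices. I also record that $C$ is an \emph{induced} cycle: by the minimality of $\mathbf{C}$ no cycle in $\mathbf{C}$ has a chord, and we already know $|C| \leq 4$.

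The heart of the argument is a case distinction on the largest number of edges a single vertex of $T_j$ sends to $C$. If some $w \in V(T_j)$ has $e(w, C) = 3$, then Lemma \ref{lma} forces $|C| = 3$; since $w$ lies in $T_j$ and hence outside $C \cup P_1$, Lemma \ref{lemma} applied to $C$, the path $P_1$, and $z = w$ yields two disjoint cycles. If the maximum is $2$, say $e(z, C) = 2$ for some $z \in V(T_j)$, then because $e(T_j, C) \geq 3$ there is another vertex $w \in V(T_j)$ with $e(w, C) \geq 1$; letting $P_2$ be the path of $T_j$ from $z$ to $w$ (disjoint from $P_1$ since $i \neq j$), Lemma \ref{lemmb} applies to $C, P_1, P_2$. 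Finally, if every vertex of $T_j$ sends at most one edge to $C$, then $e(T_j, C) \geq 3$ forces three distinct vertices $u, v, w \in V(T_j)$ each with at least one edge to $C$, and since $T_j$ is connected and disjoint from $P_1$, Lemma \ref{lemmc} applies. In each branch we obtain two disjoint cycles or a shorter cycle, completing the contradiction.

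The only point needing care is checking, in each branch, that the hypotheses of Lemmas \ref{lemma}, \ref{lemmb}, and \ref{lemmc} hold verbatim: that $P_1$ and the relevant part of $T_j$ are genuinely disjoint (guaranteed by $i \neq j$ together with $V(H) \cap V(\mathbf{C}) = \emptyset$), that $C$ is induced with $|C| \leq 4$, and that the end-vertices of $P_1$ keep degree at least $2$ into $C$. I do not anticipate a serious obstacle here, since the three auxiliary lemmas were tailored precisely to these three configurations; the main work is the clean bookkeeping of which lemma to invoke in which case.
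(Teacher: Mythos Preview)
Your argument is correct and follows essentially the same route as the paper's own proof: assume $e(T_j,C)\ge 3$, split into three cases according to whether some vertex of $T_j$ sends $3$, at most $2$, or at most $1$ edge to $C$, and invoke Lemmas~\ref{lemma}, \ref{lemmb}, \ref{lemmc} respectively using a path $P_1$ in $T_i$ between two vertices of $Y\cap V(T_i)$. You are a bit more explicit than the paper in setting up $P_1$ and verifying the hypotheses of the three lemmas, but the structure is the same.
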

\begin{proof} Suppose otherwise $e(T_j, C) \geq 3$. First, if there exist some $u \in V(T_j)$ such that $e(u, C) \geq 3$, using Lemma \ref{lma}, we know $|C| = 3$. Then using Lemma \ref{lemma}, we get two disjoint cycles in $G[C \cup T_i \cup T_j]$. Second, if there are two vertices $u, v \in V(T_j)$ such that $e(u, C) \geq 2$ and $e(v, C) \geq 1$, by Lemma \ref{lemmb} and (\ref{insert}), we get two disjoint cycles in $G[C \cup T_i \cup T_j]$. Finally, there are three vertices $u, v, w \in V(T_j)$ such that $e(u, C), e(v, C), e(w, C) \geq 1$, by Lemma \ref{lemmc} and (\ref{insert}), again we get two disjoint cycles in $G[C \cup T_i \cup T_j]$.
\end{proof}

From (\ref{eq1}) and Claim \ref{claimd},
\begin{equation}\label{eq4}
e(I \cap V(T_i), C) \geq 2t + 2\omega + 2 - 2(\omega - 1) \geq 2t + 4.
\end{equation}

By Claim \ref{claimd}, $|Y \cap V(T_j)| \leq 1$ for any $j \neq i$. Hence, by (\ref{eq3}), $|Y \cap V(T_i)| \geq |Y| - (\omega - 1) \geq (\omega + 1) - (\omega - 1) = 2$. We claim that at most one vertex $x \in V(T_i) \cap I$ with $e(x, C) \geq 3$. Suppose there are two vertices $x, y \in V(T_i) \cap I$ with $e(x, C), e(y, C) \geq 3$. Then by Lemma \ref{lma}, $|C| = 3$. By (\ref{eq4}), there must a vertex $z \in  V(T_i) \cap I$ with $e(z, C) \geq 1$. So there is a path in $T_i$ connecting, say $x$, to $z$ with $y$ not on it. Without loss of generality, let $C = v_0v_1v_2v_0$ and $zv_0 \in E(G)$. Now $x \cdots z v_0x$ and $yv_1v_2y$ are two disjoint cycles in $G[C \cup T_i]$, a contradiction.

Actually, $|Y \cap V(T_i)| \geq 3$. Assume $|Y \cap V(T_i)| \leq 2$. Let $U = \{v \in I \cap V(T_i) : e(v, C) = 1\}$. Then by (\ref{eq4}), $|U| \geq 2t + 4 - (3 + 2) = 2t - 1$. Hence, $|I| \geq |I \cap V(T_i)| \geq 2t - 1 + 2 = 2t + 1$, a contradiction.

Let $A = \{v \in I \cap V(T_i) : e(v, C) \geq 2\}$ and $B = \{v \in I \cap V(T_i) : e(v, C) = 1 \}$. By Lemmas \ref{lemma} and \ref{lemmb}, there exists a center vertex $u^\ast$ such that for any $x, y \in A$ the paths from $x$ to $u^\ast$ and from $y$ to $u^\ast$ are disjoint except the end-vertex $u^\ast$. Moreover, for any $x \in A$, the path from $x$ to $u^\ast$ contains no vertex in $A \cup B$.  For any two vertices $u, v \in B$, let $P_1$ be the path joining $u$ to the center vertex $u^\ast$ and $P_2$ be the path joining $v$ to the center vertex $u^\ast$. By Lemma \ref{lemmb}, either $P_1$ and $P_2$ are disjoint except for the center vertex $u^\ast$ or $P_1 \subseteq P_2$ or $P_2 \subseteq P_1$. By Lemma \ref{lemmc}, at most two vertices $u, v \in B$ satisfying the latter case. That is to say, for any $u \in B$, the path from $u$ to $u^\ast$ contains at most one other vertex in $B$. By Lemma \ref{lemmb}, for any $u \in B$, the path from $u$ to $u^\ast$ contains no  vertex in $A$. From the analysis above, we can see that the structure of $T_i$ is something like an extension of a star, see Fig.1.

\begin{figure}[h]
  \centering
  \includegraphics[width=6cm]{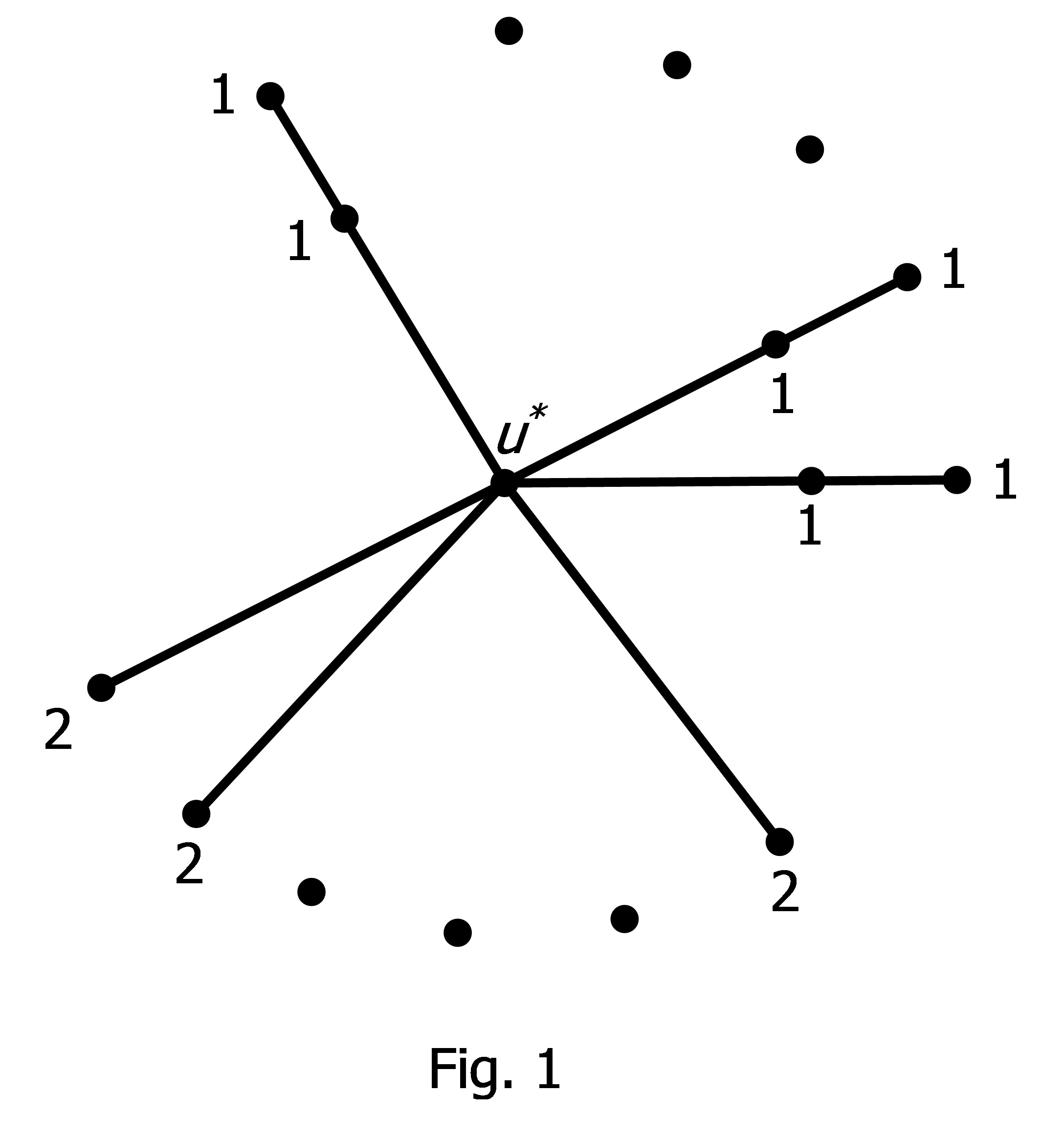}\\

\end{figure}

Using Lemma \ref{lmf}, we know $T_i$ contains at most $(t - 1) - (w - 1) = t - w$ leaves. Hence, if we let $|A| = a$ and $|B| = b$, then $b\leq 2(t - \omega - (a - 1))$, $a - 1$ by considering the center vertex may belong to $A$. Therefore, $e(I \cap V(T_i), C) \leq 3 + 2a + b\leq 2t - 2\omega + 5$. By (\ref{eq4}), $2t - 2\omega + 5 \geq 2t + 4$, i.e. $1 - 2\omega \geq 0$, a contradiction.
\\
\\
{\bf CASE 2} \  $|H| \leq 3t - 2$.
\\
\\
{\bf Subcase 1} $k = 2$.
\\
\\
In this case, $\mathbf{C}$ has only one cycle $C$, and $H = G - C$.  Suppose $|H| \geq 2t - 1$. Since $H$ is a forest, it is  bipartite. There is a partition of $V(H) = V_1 \cup V_2$ such that $V_i$ is an independent set for $i = 1, 2$.  Since $|H| \geq 2t - 1$, one of $V_1, V_2$ has at least $t$ vertices. Thus there exists an independent set $X \subseteq H$ with $|X| = t$. Choose $X$ such that it contains as many leaves of $H$ as possible.

\begin{claim}\label{clmf} $d_H(X) \leq 2t - 2$.
\end{claim}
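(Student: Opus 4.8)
The natural tool here is Lemma \ref{lemk}-(iii): any set $S$ that contains all leaves of the forest $H$ satisfies $d_H(S) \le 2|S| - 2\omega(H)$. So the plan is to argue that the leaf-maximizing independent set $X$ contains all leaves of $H$, apart from an unavoidable defect coming from isolated-edge components, and then to feed this into Lemma \ref{lemk}-(iii). The first thing to record is a structural fact: in a forest two leaves can be adjacent only when they constitute a whole $K_2$ component, so outside the $K_2$ components the leaves are pairwise non-adjacent. Hence an independent set may contain every leaf of $H$ except that, in each $K_2$ component, it can keep only one of the two adjacent leaves.

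Using that $X$ was chosen to contain as many leaves of $H$ as possible, I would next show, by the same swapping idea as in the proof of Claim \ref{call}, that if $X$ contains at least one non-leaf then $X$ contains every isolated vertex, every leaf of each component of order at least $3$, and exactly one leaf of each $K_2$ component. Indeed, if some such leaf $\ell$ were outside $X$, its unique neighbour $w$ (which is forced to be a non-leaf) is either outside $X$, in which case one adds $\ell$ and deletes any non-leaf of $X$, or inside $X$, in which case one replaces $w$ by $\ell$; either move keeps $X$ independent and of size $t$ while strictly increasing the number of leaves, contradicting the choice of $X$.

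With the structure in hand I split into two regimes. If $X$ consists only of leaves, then every vertex of $X$ has degree at most $1$, so $d_H(X) \le t \le 2t - 2$ since $t \ge 5$. Otherwise let $p$ denote the number of $K_2$ components, and enlarge $X$ to $S$ by adjoining the omitted second leaf of each such component; then $S$ contains all leaves of $H$ and $|S| = t + p$, so Lemma \ref{lemk}-(iii) yields $d_H(S) \le 2(t+p) - 2\omega(H)$. Since each adjoined vertex contributes degree exactly $1$, we get $d_H(X) = d_H(S) - p \le 2t + p - 2\omega(H)$. It remains to bound $p$: distinct $K_2$ components being distinct components gives $p \le \omega(H)$, which already forces $p \le 2\omega(H) - 2$ when $\omega(H) \ge 2$, while if $\omega(H) = 1$ the single tree has order $|H| \ge 2t - 1 > 2$ and so is not a $K_2$, whence $p = 0$. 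In all cases $d_H(X) \le 2t - 2$, as required.

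The part I expect to need the most care is precisely this handling of the $K_2$ (and isolated-vertex) components: because $X$ is a single independent set it genuinely cannot absorb both endpoints of an isolated edge, so, unlike the union $I$ of two independent sets in Claim \ref{call}, the set $X$ need not contain all leaves of $H$, and the inequality is salvaged only through the augmentation step together with the bound $p \le 2\omega(H) - 2$.
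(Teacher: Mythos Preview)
Your argument is correct. The core swapping step---showing that a leaf-maximal independent $X$ of size $t$ picks up every leaf except one partner per $K_2$ component---is the same as in the paper. The counting, however, is organised differently. The paper works component by component: for each $T_i\notin\{K_1,K_2\}$ it shows $X_i=X\cap V(T_i)$ contains all leaves of $T_i$ and applies Lemma~\ref{lemk}-(ii) to get $d_{T_i}(X_i)\le 2|X_i|-2$, while for $T_i\in\{K_1,K_2\}$ it uses the trivial bound $d_{T_i}(X_i)\le |X_i|$; summing gives $d_H(X)\le 2t-2$ directly, with no reference to $\omega(H)$. You instead augment $X$ by the missing $K_2$-partners to a set $S$ containing all leaves, apply Lemma~\ref{lemk}-(iii) globally to obtain $d_H(X)\le 2t+p-2\omega(H)$, and finish via $p\le 2\omega(H)-2$ (using $|H|\ge 2t-1$ when $\omega(H)=1$). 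A second minor difference: the paper invokes Lemma~\ref{lmf} to guarantee $H$ has at most $t-1$ leaves, so $X$ always has a non-leaf available for the swap; you avoid that citation by separating off the ``all leaves'' case and using $t\ge 5$. The paper's route is slightly shorter and needs no case split on $\omega(H)$; yours gives the sharper intermediate bound $2t+p-2\omega(H)$ and mirrors the augmentation idea from Claim~\ref{call}.
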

\begin{proof} Let $X_i = V(T_i) \cap X$, then $X = \cup_{1 \leq i \leq \omega} X_i$. If $T_i \notin \{K_1, K_2\}$, then we claim that $X_i$ contains all the leaves of $T_i$. Actually, suppose $x$ is a leaf of $T_i$ and $x \notin X_i$. Consider the neighbor of $x$ in $T_i$, and denote it by $y$, obviously $d_{T_i}(y) \geq 2$. If $y \in X_i$, then replace $X_i$ by $X'_i = (X - \{y\}) \cup \{x\}$ and $X$ by $X' = (X - X_i) \cup X'_i$, we get an independent set which contains more leaves than $X$, a contradiction. So $y \notin X_i$. By Lemma \ref{lmf}, $H$ contains at most $t - 1$ leaves, thus there exists a vertex $z \in X$ with $d_H(z) \geq 2$. Replace $X$ by $X' = (X - \{z\}) \cup \{x\}$, again we get an independent set which contains more leaves than $X$. Therefore, by Lemma \ref{lemk}-(ii), $d_H(X_i) \leq 2|X_i| - 2$.

For those $T_i \in \{K_1, K_2\}$, it is easy to see that $d_H(X_i) \leq |X_i| = 1$. Therefore, $d_H(X) \leq \sum_{T_i \notin\{K_1, K_2\}}(2|X_i| - 2) + \sum_{T_i \in\{K_1, K_2\}}|X_i|$ $\leq 2|X| - 2$.
\end{proof}

Thus $e(X, C) \geq 2kt - t + 1 - (2t - 2) = t + 3$. Therefore, $e(x, C) \geq 2$ for some $x \in I$.  By Lemma \ref{lma}, this means $|C| \leq 4$. Thus, $|H| \geq |G| - |C| \geq (4t - 2) - 4 = 4t - 6 \geq 3t - 1$, since $t \geq 5$. By Case 1, this is a contradiction.

Therefore, $|H| \leq 2t - 2$. Then $|C| \geq |G| - |H| \geq (4t - 2) - (2t - 2) = 2t + 4$. Thus there exist two disjoint independent sets $X_1, X_2$ in $C$ such that each has $t$ vertices. Denote their union by $I$. Since $C$ has no chord, $d_C(I) = 4t$. Then $e(I, H) \geq 2(2kt - t + 1) - 4t = 4kt - 6t + 2 = 2t + 2$, since $k = 2$. On the other hand, since $|C| \geq 2t + 4 > 4$, by Lemma \ref{lma}, $e(v, C) \leq 1$ for any $v \in H$. Hence, $e(H, C) \leq |H|$. Therefore, $2t + 2 \leq e(I, H) \leq e(C, H) \leq |H|$, i.e. $|H| \geq 2t + 2$, a contradiction.
\\
\\
{\bf Subcase 2} $k \geq 3$.
\\
\\
Let $C \in \mathbf{C}$ be the longest cycle. Suppose $|C| \geq 2t$. Denote by $|C| = st + r$. Thus there exist $s$ independent sets $X_1, \ldots , X_s$ in $C$, where $s \geq 2$ and $0 \leq r \leq t - 1$. Let $I = X_1 \cup \cdots \cup X_s$. Since $C$ has no chord, $d_C(I) = 2st$. Moreover, since $|C| \geq 5$, by Lemma \ref{lma}, $e(I, H) \leq |H| \leq 3t - 2$. Hence,
\begin{eqnarray}\label{eqa}
e(I, \mathbf{C} - C) &\geq& s(2kt - t + 1) - 2st - (3t - 2)  \nonumber \\
                       &=&    2st(k - 2) + s(t + 1) - (3t - 2). \nonumber
\end{eqnarray}
Therefore, there exists some $C' \in \mathbf{C} - C$ such that

\begin{equation}\label{eqb}
e(I, C') \geq 2st + \frac{s(t + 1) - (3t - 2)}{k - 2}
\end{equation}
We now discuss in two cases according to $s$.
\\
\\
Let $h = max \{e(v, C') : v \in I\}$ and $Z = \{v \in I : N_{C'}(v) \neq \emptyset\}$.

{\bf Subcase 2.1} $s \geq 3$.
\\
\\
In this case, by (\ref{eqb}), $e(I, C') \geq 2st + 1$. Thus $e(x, C') \geq 3$ for some $x \in I$. Then $3 \leq h \leq |C'| \leq |C| = st + r$. So $e(I - x, C') \geq 2st + 1 - (st + r) = st - r + 1 \geq 2t + 2 \geq 12$, since $s \geq 3$, $r \leq t - 1$ and $t \geq 5$. This implies that $N_{C'}(C - x) \neq \emptyset$. Then $|Z| \geq 2$.

Suppose that $|Z| = 2$. Then $d_{C'}(v) \geq 12$ for any $v \in Z$. By Lemma \ref{lmg}-(i), $G[C \cup C']$ contains two shorter disjoint cycles, contradicts (\ref{insert}). Suppose that $|Z| \geq 6$, by Lemma \ref{lmg}-(iv), $G[C \cup C']$ contains two shorter disjoint cycles, again a contradiction. Therefore, $3 \leq |Z| \leq 5$. Since $e(I - x, C') \geq 12$, $d_{C'}(y) \geq 3$ for some $y \in I - x$. By Lemma \ref{lmg}-(ii), we get two shorter cycles in $G[C \cup C']$. In any case, we get a contradiction.
\\
\\
{\bf Subcase 2.2} $s = 2$.
\\
\\
By (\ref{eqb}) and $k \geq 3$, $e(I, C') \geq 4t + \frac{4 - t}{k - 2} \geq 3t + 4$. Then $2 \leq h \leq |C'| \leq 2t + r$. Without loss of generality, assume $x \in I$ satisfies $e(x, C') = h$. So $e(I - x, C') \geq 3t + 4 - (2t + r) = t - r + 4 \geq 5$. That is to say, $N_{C'}(I - x) \neq \emptyset$. Then $|Z| \geq 2$.

Suppose $h = 2$. Let $Y = \{v \in I : e(v, C') = 2\}$. Then $e(I, C') \leq 2|Y| + (|I| - |Y|) = |Y| + 2t$. Since $e(I, C') \geq 3t + 4$, we get $|Y| + 2t \geq 3t + 4$, i.e. $|Y| \geq t + 4 > 5$. By Lemma \ref{lmg}-(v), we get two shorter cycles in $G[C \cup C']$. Therefore, $h \geq 3$.

If $|Z| = 2$, then $e(v, C') \geq 5$ for any $v \in Z$. By Lemma \ref{lmg}-(i), there are two shorter cycles in $G[C \cup C']$. If $|Z| \geq 6$, by Lemma \ref{lmg}-(iv), $G[C \cup C']$ contains two shorter disjoint cycles,a contradiction. So $5 \geq |Z|\geq 3$. Since $e(I - x, C') \geq 5$, it is not difficult to check that we can get one of the following degree sequence $S$ from $C$ to $C'$: ($h, 4, 1$), ($h$, 3, 2), ($h$, 3 , 1, 1), ($h$, 2, 2, 1) and ($h$, 2, 1, 1, 1). Using Lemma \ref{lmg}, in any case we get two shorter cycles in $G[C \cup C']$.

Therefore, $|C| \leq 2t - 1$ for any $C \in \mathbf{C}$. So $|\mathbf{C}| \leq (k - 1)(2t - 1)$. Hence $|H| \geq |G| - |\mathbf{C}| \geq (2t - 1)k - (2t - 1)(k - 1) = 2t - 1$. As discussed before, $H$ contains an independent set $X$ with $t$ vertices. Choose $X$ such that it has as many leaves as possible. Using Claim \ref{clmf}, we know that $d_H(X) \leq 2t - 2$. Thus, $e(X, \mathbf{C}) \geq (2kt - t + 1) - (2t - 2) = 2t(k - 1) + 3 - t$. Therefore, there exists some $C \in \mathbf{C}$ such that
\begin{eqnarray}\label{eq5}
e(X, C) &\geq& 2t + \frac{3 - t}{k - 1} \nonumber \\
&>& t + 3, \mbox{  since }  k > 2  \mbox{ and } t \geq 5. \nonumber
\end{eqnarray}
This means $e(x, C) \geq 2$ for some $x \in X$. By Lemma \ref{lma}, we see $|C| \leq 4$. Hence, $|\mathbf{C}| \leq 4 + (k - 2)(2t - 1)$. It follows that $|H| \geq |G| - |\mathbf{C}| \geq (2t - 1)k - (4 + (2t - 1)(k - 2) = 4t - 6 \geq 3t - 1$, since $t \geq 5$. By Case 1, we get a contradiction. We finish our proof of Theorem \ref{M and Y}.
\\
\\

\end{document}